\newtheorem{lemma}[thm]{Lemma}
\newtheorem{proposition}[thm]{Proposition}
\newtheorem{assumption}[thm]{Assumption}
\newtheorem{definition}[thm]{Definition}
\newenvironment{lemma*}[2][Lemma]{\begin{trivlist}
\item[\hskip \labelsep {\bfseries #1}\hskip \labelsep {\bfseries #2}]\itshape }{ \end{trivlist}}
\begin{document}
\begin{frontmatter}
\title{A Secure Control Framework for Resource-Limited Adversaries}

\thanks[footnoteinfo]{This paper was not presented at any IFAC
meeting. Corresponding author Andr\'{e} Teixeira. Tel. +46-73-429 78 31.
Fax +46-8-790 73 29.}

\author{Andr\'{e} Teixeira$^\star$}\ead{\texttt{andretei@kth.se}},    
\author{Iman Shames$^\dagger$}\ead{\texttt{iman.shames@unimelb.edu.au}},               
\author{Henrik Sandberg$^\star$}\ead{\texttt{hsan@kth.se}},  
\author{Karl H. Johansson$^\star$}\ead{\texttt{kallej@kth.se}}

\address{$^\star$ACCESS Linnaeus Centre, KTH Royal Institute of Technology, Electrical Engineering, Stockholm, Sweden }
\address{$^\dagger$Department of  Electrical and Electronic Engineering, University of Melbourne, Australia }

\begin{keyword}                           
Cyber-physical systems, security, attack space, secure control systems.
\end{keyword}

\begin{abstract}
Cyber-secure networked control is modeled, analyzed, and experimentally illustrated in this paper. An attack space defined by the adversary's system knowledge, disclosure, and disruption resources is introduced. Adversaries constrained by these resources are modeled for a networked control system architecture. It is shown that attack scenarios corresponding to denial-of-service, replay, zero-dynamics, and bias injection attacks can be analyzed using this framework. Furthermore, the attack policy for each scenario is described and the attack's impact is characterized using the concept of safe sets. An experimental setup based on a quadruple-tank process controlled over a wireless network is used to illustrate the attack scenarios, their consequences, and potential counter-measures.
\end{abstract}

\end{frontmatter}

\section{Introduction}
\label{sec:intro}

Safe and reliable operation of infrastructures is of major societal importance. These systems need to be engineered in such a way so that they can be continuously monitored, coordinated, and controlled despite a variety of potential system disturbances. Given the strict operating requirements and system complexity, such systems are operated through IT infrastructures enabling the timely data flow between digital controllers, sensors, and actuators. However, the use of communication networks and heterogeneous IT components has made these cyber-physical systems vulnerable to cyber threats. One such example are the industrial systems and critical infrastructures operated through Supervisory Control and Data Acquisition (SCADA) systems. The measurement and control data in these systems are commonly transmitted through unprotected communication channels, leaving the system vulnerable to several threats~\cite{kn:Johansson09}. As illustrative examples, we mention the cyber attacks on power transmission networks operated by SCADA systems reported in the public media~\cite{kn:WallStreet09}, and the Stuxnet malware that supposedly infected an industrial control system and disrupted its operation~\cite{kn:symantec2010_report,kn:Rid2011}.

There exists a vast literature on computer security focusing on three main properties of data and IT services, namely confidentiality, integrity, and availability~\cite{kn:Bishop2002}. Confidentiality relates to the non-disclosure of data by unauthorized parties. Integrity on the other hand concerns the trustworthiness of data, meaning there is no unauthorized change of the data contents or properties, while availability means that timely access to the data or system functionalities is ensured.
Unlike other IT systems where cyber-security mainly involves the protection of data, cyber attacks on networked control systems may influence physical processes through feedback actuation. Therefore networked control system security needs to consider threats at both the cyber and physical layers. Furthermore, it is of the utmost importance in the study of cyber attacks on control systems to capture the adversary's resources and knowledge. Cyber threats can be captured in the attack space illustrated in Figure~\ref{fig:attack_space}, which depicts several attack scenarios as points. For instance, the eavesdropping attack and the denial-of-service (DoS) attack are indicated in the figure.

\begin{figure}[h]
 \def\svgwidth{8.5cm}
  \centering
  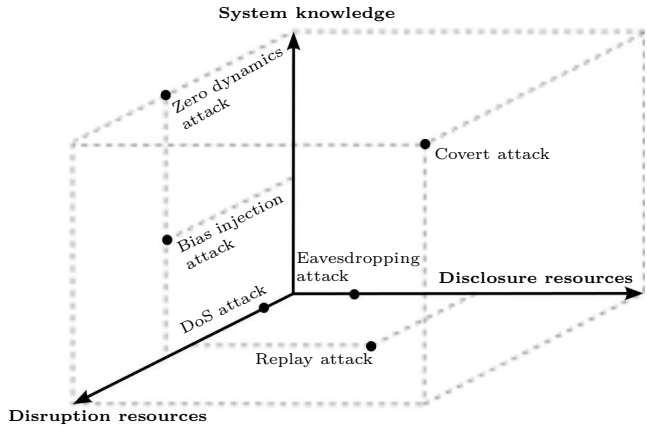
  \caption{The cyber-physical attack space.\label{fig:attack_space}}
\end{figure}

We propose three dimensions for the attack space: the adversary's \emph{a priori} system model knowledge and his disclosure and disruption resources. The \emph{a priori} system knowledge can be used by the adversary to construct more complex attacks, possibly harder to detect and with more severe consequences. Similarly, the disclosure resources enable the adversary to obtain sensitive information about the system during the attack by violating data confidentiality. Note that disclosure resources alone cannot disrupt the system operation. An example of an attack using only disclosure resources is the eavesdropping attack illustrated in Figure~\ref{fig:attack_space}. On the other hand, disruption resources can be used to affect the system operation, which happens for instance when data integrity or availability properties are violated. One such example is the DoS attack, where the data required for correctly operating the system are made unavailable. In particular this characterization fits the Stuxnet malware, which had resources to record and manipulate data in the SCADA network~\cite{kn:symantec2010_report}. Moreover, the complexity and operation of Stuxnet also indicate that its developers had access to a reasonable amount of knowledge of both physical and cyber components of the target control system.

\subsection{Related Work}

Control theory has contributed with frameworks to handle model uncertainties and disturbances as well as fault diagnosis and mitigation, see, for example, \cite{kn:Zhou1996} and \cite{Cheng_Patton_1999,Hwang2010}, respectively. These tools can be used to detect and attenuate the consequences of cyber attacks on networked control systems, as has recently been done in the literature.

Cyber attacks on control systems compromising measurement and actuator data integrity and availability have been considered in~\cite{kn:Cardenas08b}, where the authors modeled the attack effects on the physical dynamics. Several attack scenarios have been simulated and evaluated on the Tennessee-Eastman process control system~\cite{Cardenas2011} to study the attack impact and detectability. The attack scenarios in~\cite{Cardenas2011} are related to the ones considered in this paper, but we quantify the attack resources and policies in a systematic way.

Availability attacks have been analyzed in~\cite{AminCardenasSastry-HSCC-2009, kn:Gupta2010} for resource constrained adversaries with full-state information. Particularly, the authors considered DoS attacks in which the adversary could tamper with the communication channels and prevent measurement and actuator data from reaching their destination, rendering the data unavailable. A particular instance of the DoS attack in which the adversary does not have any \emph{a priori} system knowledge, as the attack in~\cite{AminCardenasSastry-HSCC-2009}, is represented in the attack space in Figure~\ref{fig:attack_space}.

Deception attacks compromising integrity have recently received attention. Replay attacks on the sensor measurements, which is a particular kind of deception attack, have been analyzed in~\cite{kn:Bruno09}. The authors considered the case where all the existing sensors were attacked and suitable counter-measures to detect the attack were proposed. In this attack scenario the adversary does not have any system knowledge but is able to access and corrupt the sensor data, thus having disclosure and disruptive resources, as depicted in Figure~\ref{fig:attack_space}.

Another class of deception attacks, false-data injection attacks, has been studied in recent work. For instance, in the case of power networks, an adversary with perfect model knowledge has been considered in~\cite{kn:Liu09}. The work in~\cite{kn:Kosut10} considered stealthy attacks with limited resources and proposed improved detection methods, while~\cite{kn:Sandberg10} analyzed the minimum number of sensors required for stealthy attacks. A corresponding measurement security metric for studying sets of vulnerable sensors was proposed in~\cite{kn:Sandberg10}. The consequences of these attacks have also been analyzed in~\cite{kn:Xie10,kn:Teixeira2011,kn:Teixeira_ACC2012}. In particular, in \cite{kn:Teixeira2011} the authors analyzed attack policies with limited model knowledge and performed experiments on a power system control software, showing that such attacks are stealthy and can induce the erroneous belief that the system is at an unsafe state. The models used in the previous work are static, hence these attack scenarios are closest to the bias injection attack shown in Figure~\ref{fig:attack_space}.

Data injection attacks on dynamic control systems were also considered.
In~\cite{Smith-IFAC-2011} the author characterizes the set of attack policies for covert (undetectable) false-data injection attacks with detailed model knowledge and full access to all sensor and actuator channels, while~\cite{kn:Pasqualetti2011} described the set of undetectable false-data injection attacks for omniscient adversaries with full-state information, but possibly compromising only a subset of the existing sensors and actuators. In the context of multi-agent systems, optimal adversary policies for data injection using full model knowledge and state information were derived in~\cite{KhanaferTouriBasar2012}. In these attack scenarios confidentiality was violated, as the adversary had access to either measurement and actuator data or full-state information. These attacks are therefore placed close to the covert attack in Figure~\ref{fig:attack_space}.

Most of the recent work on cyber-security of control systems has considered scenarios where the adversary has access to a large set of resources and knowledge, thus being placed far from the origin of the attack space in Figure~\ref{fig:attack_space}. A large part of the attack space has not been addressed. In particular, the class of detectable attacks that do not trigger conventional alarms has yet to be covered in depth.

\subsection{Contributions and Outline}
In this paper we consider a typical networked control architecture under both cyber and physical attacks. A generic adversary model applicable to several attack scenarios is discussed and the attack resources are mapped to the corresponding dimensions of the attack space. To illustrate the proposed framework, we consider several attack scenarios where the adversary's goal is to drive the system to an unsafe state while remaining stealthy. For each scenario we formulate the corresponding stealthy attack policy, comment on the attack's performance, and describe the adversary's capabilities along each dimension of the attack space in Figure~\ref{fig:attack_space}, namely the disclosure resources, disruption resources, and system knowledge. Some of the attack scenarios analyzed in the paper have been staged on a wireless quadruple tank testbed for security of control systems. The testbed architecture and results from the staged attacks are presented and discussed.

One of the attack scenarios analyzed corresponds to a novel type of detectable attack, the bias injection attack. Although this attack may be detected, it can drive the system to an unsafe region and it only requires limited model knowledge and no information about the system state. Stealthiness conditions for this attack are provided, as well as a methodology to assess the attack impact on the physical state of the system.

The material in this paper is an extension of the authors' preliminary work, see~\cite{kn:Teixeira_HICONS2012}. Particularly, in the current work the attack goals are formalized using the notion of safe regions of the state space and two additional attack scenarios are described and analyzed. Furthermore, the attack performance of each scenario is analyzed in more detail and additional results for the zero-dynamics and bias injection attacks are presented.

The outline of the paper is as follows. The system architecture and model are described in Section~\ref{sec:cps}, while Section~\ref{sec:attack_models} contains the adversary model and a detailed description of the attack resources on each dimension of the attack space. The framework introduced in the previous sections is then illustrated for five particular attack scenarios in Section~\ref{sec:attack_scenarios}, supposing that the adversary aims at driving the system to an unsafe state while remaining stealthy. The attack policy, attack performance, and required system knowledge, disclosure, and disruption resources are described in detail for each attack scenario. The results of the experiments for four of the attack scenarios in a secure control systems testbed are presented and discussed in Section~\ref{sec:experiments}, followed by conclusions in Section~\ref{sec:conc}.

\section{Networked Control System}\label{sec:cps}

In this section we describe the networked control system structure, where we consider three main components: the physical plant and communication network, the feedback controller, and the anomaly detector.

\subsection{Physical Plant and Communication Network}
The physical plant is modeled in a discrete-time state-space form
\begin{equation}\label{eq:plant_state space_faults}
\mathcal{P}:\left\{\begin{aligned}
x_{k+1}&=A x_k+B \tilde{u}_k + G w_k + F f_k\\
y_k&=C x_k + v_k 
\end{aligned}\right. ,
\end{equation}
where $x_k \in \mathbb{R}^{n}$ is the state variable, $\tilde{u}_k\in\mathbb{R}^{q}$ the control actions applied to the process, $y_k\in\mathbb{R}^{p}$ the measurements from the sensors at the sampling instant $k \in \mathbb{Z}$, and $f_k\in\mathbb{R}^d$ is the unknown signal representing the effects of anomalies, usually denoted as fault signal in the fault diagnosis literature~\cite{Ding2008}. The process and measurement noise, $w_k \in \mathbb{R}^n$ and $v_k \in \mathbb{R}^p$, represent the discrepancies between the model and the real process, due to unmodeled dynamics or disturbances, for instance, and we assume their means are respectively bounded by $\delta_w$ and $\delta_v$, i.e. $\bar{w} = \|\mathbb{E}\{w_k\}\| \leq \delta_w$ and $\bar{v} = \|\mathbb{E}\{v_k\}\|\leq \delta_v$.

The physical plant operation is supported by a communication network through which the sensor measurements and actuator data are transmitted, which at the plant side correspond to $y_k$ and $\tilde{u}_k$, respectively. At the controller side we denote the sensor and actuator data by $\tilde{y}_k\in\mathbb{R}^{p}$ and $u_k\in\mathbb{R}^{q}$, respectively. Since the communication network may be unreliable, the data exchanged between the plant and the controller may be altered, resulting in discrepancies in the data at the plant and controller ends. In this paper we do not consider the usual communication network effects such as packet losses and delays. Instead we focus on data corruption due to malicious cyber attacks, as described in Section~\ref{sec:attack_models}. Therefore the communication network \textit{per se} is supposed to be reliable, not affecting the data flowing through it.

Given the physical plant model~\eqref{eq:plant_state space_faults} and assuming an ideal communication network, the networked control system is said to have a \emph{nominal behavior} if $f_k = 0$, $\tilde{u}_k=u_k$, and $\tilde{y}_k=y_k$.
The absence of either one of these condition results in an abnormal behavior of the system.

\subsection{Feedback Controller}
In order to comply with performance requirements in the presence of the unknown process and measurement noises, we consider that the physical plant is controlled by an appropriate linear time-invariant feedback controller~\cite{kn:Zhou1996}. The output feedback controller can be written in a state-space form as
\begin{equation}\label{eq:controller_space_state}
\mathcal{F}:\left\{\begin{aligned}
z_{k+1} &= A_c z_k + B_c \tilde{y}_k \\
u_k &= C_c z_k + D_c \tilde{y}_k
\end{aligned}\right.
\end{equation}
where the states of the controller, $z_k \in \mathbb{R}^m$, may include the process state and tracking error estimates. Given the plant and communication network models, the controller is supposed to be designed so that acceptable performance is achieved under nominal behavior.

\subsection{Anomaly Detector}
In this section we consider the anomaly detector that monitors the system to detect possible anomalies, i.e. deviations from the nominal behavior. The anomaly detector is supposed to be collocated with the controller, therefore it only has access to $\tilde{y}_k$ and $u_k$ to evaluate the behavior of the plant.

Several approaches to detecting malfunctions in control systems are available in the fault diagnosis literature \cite{Ding2008,Hwang2010}. Here we consider the following observer-based Fault Detection Filter
\begin{equation}\label{eq:residual_dynamics}
\mathcal{D}:\left\{\begin{aligned}
\hat{x}_{k|k} & = A\hat{x}_{k-1|k-1} + Bu_{k-1} + K(\tilde{y}_k - \hat{y}_{k|k-1})\\
r_k  & = V(\tilde{y}_k - \hat{y}_{k|k})
\end{aligned} \right. ,
\end{equation}
where $\hat{x}_{k|k}\in\mathbb{R}^{n}$ and $\hat{y}_{k|k}=C\hat{x}_{k|k}\in\mathbb{R}^{p}$ are the state and output estimates given measurements up until time $k$, respectively, and $r_k\in\mathbb{R}^{p_d}$ the residue evaluated to detect and locate existing anomalies.

The anomaly detector is designed by choosing $K$ and $V$ such that
\begin{enumerate}
    \item under nominal behavior of the system (i.e., $f_k = 0$, $u_k=\tilde{u}_k$, $y_k=\tilde{y}_k$), the expected value of the residue converges asymptotically to a neighborhood of zero, i.e., $\lim_{k\rightarrow\infty} \|\mathbb{E}\{r_k\}\| \leq \delta_r$, with $\delta_r \in \mathbb{R}^+$;
    \item the residue is sensitive to the anomalies ($f_k\not\equiv0$).
 \end{enumerate}

An alarm is triggered  if the residue meets
\begin{equation}\label{eq:residue_threshold}
\| r_k \| \geq \delta_r + \delta_{\alpha},
\end{equation}
where $\delta_{\alpha}\in \mathbb{R}^+$ is chosen so that the false alarm rate does not exceed a given threshold $\alpha\in[0,\, 1]$.

\section{Adversary Models}\label{sec:attack_models}
\begin{figure}
\centering
 \def\svgwidth{8cm}
  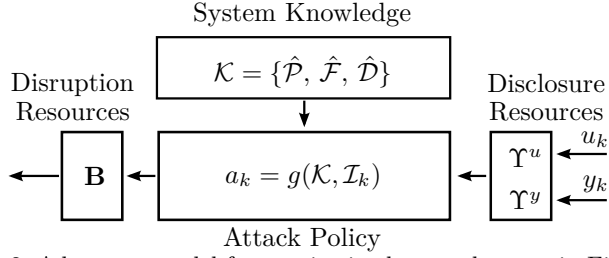
  \caption{Adversary model for a point in the attack space in Figure~\ref{fig:attack_space}.\label{fig:attack_policy}}
\end{figure}

The adversary model considered in this paper is illustrated in Figure~\ref{fig:attack_policy} and is composed of an attack policy and the adversary resources i.e., the system model knowledge, the disclosure resources, and the disruption resources. Each of the adversary resources can be mapped to a specific axis of the attack space in Figure~\ref{fig:attack_space}: $\mathcal{K}=\{\hat{\mathcal{P}}, \, \hat{\mathcal{F}}, \,\hat{\mathcal{D}}\}$ is the \emph{a~priori} system knowledge possessed by the adversary; $\mathcal{I}_k$ corresponds to the set of sensor and actuator data available to the adversary at time $k$ as illustrated in~\eqref{eq:disclosure_attack}, thus being mapped to the disclosure resources; $a_k$ is the attack vector at time $k$ that may affect the system behavior using the disruption resources captured by $\mathbf{B}$, as defined in the current section. The attack policy mapping $\mathcal{K}$ and $\mathcal{I}_k$ to $a_k$ at time $k$ is denoted as
\begin{equation}\label{eq:attack_policies}
\begin{aligned}
a_k &= g(\mathcal{K}, \mathcal{I}_k).
\end{aligned}
\end{equation}
Examples of attacks policies for different attack scenarios are given in Section~\ref{sec:attack_scenarios}.

In this section we describe the networked control system under attack with respect to the attack vector $a_k$. Then we detail the adversary's system knowledge, the disclosure resources, and the disruption resources. Models of the attack vector $a_k$ for particular disruption resources are also given.

\subsection{Networked Control System under Attack}
The system components under attack are now characterized for the attack vector $a_k$, which also includes the fault signal $f_k$. Considering the plant and controller states to be augmented as $\eta_k = [x_k^\top \quad z_k^\top ]^\top$, the dynamics of the closed-loop system composed by $\mathcal{P}$ and $\mathcal{F}$ under the effect of $a_k$ can be written as
\begin{equation}\label{eq:closed_loop_attacks}
\begin{aligned}
\eta_{k+1}
& =
\mathbf{A}
\eta_k +
\mathbf{B}
a_k
+
\mathbf{G}
\begin{bmatrix}
w_{k} \\
v_{k}
\end{bmatrix}\\
\tilde{y}_{k} & = \mathbf{C}\eta_{k} + \mathbf{D} a_{k}+ \mathbf{H} \begin{bmatrix}w_{k}\\  v_k\end{bmatrix},
\end{aligned}
\end{equation}
where the system matrices are
\begin{equation*}
\begin{array}{ll}
\mathbf{A} = \begin{bmatrix}
A + B D_c C  & B C_c\\
B_c C & A_c
\end{bmatrix}, &
\mathbf{G} = \begin{bmatrix}
G & B D_c  \\
0 & B_c
\end{bmatrix},\\
&\\
\mathbf{C} = \begin{bmatrix}
C  & 0
\end{bmatrix},
& \mathbf{H} = \begin{bmatrix}
0 & I  \\
\end{bmatrix},
\end{array}
\end{equation*}
and $\mathbf{B}$ and $\mathbf{D}$ capture the way in which the attack vector $a_k$ affects the plant and controller. These matrices are characterized for some attack scenarios in Section~\ref{sec:model_disruptive}.

Similarly, using $\mathcal{P}$ and $\mathcal{D}$ as in~\eqref{eq:plant_state space_faults} and~\eqref{eq:residual_dynamics}, respectively, the anomaly detector error dynamics under attack are described by
\begin{equation}\label{eq:residual_dynamics_attack}
\begin{aligned}
\xi_{k|k} & = \mathbf{A}_e\xi_{k-1|k-1} +\mathbf{B}_e a_{k-1} + \mathbf{G}_e \begin{bmatrix}w_{k-1}\\  v_k\end{bmatrix} \\
r_k  & = \mathbf{C}_e \xi_{k-1|k-1}   + \mathbf{D}_e a_{k-1}+ \mathbf{H}_e \begin{bmatrix}w_{k-1}\\  v_k\end{bmatrix},
\end{aligned}
\end{equation}
where $\xi_{k|k}\in\mathbb{R}^{n}$ is the estimation error and
\begin{equation*}
\begin{array}{ll}
\mathbf{A}_e = (I-KC)A,&\;
\mathbf{G}_e = \begin{bmatrix}(I-KC)G & -K \end{bmatrix},\\
\mathbf{C}_e = VC(I-KC)A,&\;
\mathbf{H}_e = \begin{bmatrix}VC(I-KC)G & V(I-CK) \end{bmatrix}.
\end{array}
\end{equation*}
The matrices $\mathbf{B}_e$ and $\mathbf{D}_e$ are specific to the available disruptive resources and are characterized in Section~\ref{sec:model_disruptive}.

\subsection{System Knowledge}
The amount of \emph{a priori} knowledge regarding the control system is a core component of the adversary model, as it may be used, for instance, to render the attack undetectable. In general, we may consider that the adversary approximately knows the model of the plant ($\hat{\mathcal{P}}$) and the algorithms used in the feedback controller ($\hat{\mathcal{F}}$) and the anomaly detector ($\hat{\mathcal{D}}$), thus denoting the adversary knowledge by $\mathcal{K}=\{\hat{\mathcal{P}}, \hat{\mathcal{F}},\hat{\mathcal{D}}\}$. Figure~\ref{fig:attack_space} illustrates several types of attack scenarios with different amounts of required system knowledge. In particular, note that the replay attacks do not need any knowledge of the system components, thus having $\mathcal{K}=\emptyset$, while the covert attack requires full knowledge about the system, hence $\mathcal{K}=\{\mathcal{P}, \mathcal{F},\mathcal{D}\}$.

\subsection{Disclosure Resources}
The disclosure resources enable the adversary to gather sequences of data from the calculated control actions $u_k$ and the real measurements $y_k$ through disclosure attacks. Denote $\mathcal{R}^u\subseteq \{1,\dots,q\}$ and $\mathcal{R}^y\subseteq \{1,\dots,p\}$ as the disclosure resources, i.e. set of actuator and sensor channels that can be accessed during disclosure attacks, and let $\mathcal{I}_k$ be the control and measurement data sequence gathered by the adversary from time $k_0$ to $k$. The disclosure attacks can then be modeled as
\begin{equation}\label{eq:disclosure_attack}
\begin{aligned}
\mathcal{I}_k & := \mathcal{I}_{k-1}
\cup \left\{
\begin{bmatrix}
\Upsilon^u  &   0\\
0           &   \Upsilon^y
\end{bmatrix}
\begin{bmatrix}
u_k\\
y_k
\end{bmatrix} \right\}
,
\end{aligned}
\end{equation}
where  $\Upsilon^{u}\in\mathbb{B}^{ |\mathcal{R}^{u}|\times q}$ and $\Upsilon^{y}\in\mathbb{B}^{ |\mathcal{R}^{y}|\times p}$ are the binary incidence matrices mapping the data channels to the corresponding data gathered by the adversary and $\mathcal{I}_{k_0} = \emptyset$.

As seen in the above description of disclosure attacks, the physical dynamics of the system are not affected by these type of attacks. Instead, these attacks gather intelligence that may enable more complex attacks, such as the replay attacks depicted in Figure~\ref{fig:attack_space}.

\subsection{Disruption Resources}\label{sec:model_disruptive}
 As seen in the system dynamics under attack,~\eqref{eq:closed_loop_attacks} and~\eqref{eq:residual_dynamics_attack}, disruption resources are related to the attack vector $a_k$ and may be used to affect the several components of the system. The way a particular attack disturbs the system operation depends not only on the respective resources, but also on the nature of the attack. For instance, a physical attack directly perturbs the system dynamics, whereas a cyber attack disturbs the system through the cyber-physical couplings.
 To better illustrate this discussion we now consider physical and data deception attacks.

\subsubsection{Physical Resources}
Physical attacks may occur in control systems, often in conjunction with cyber attacks. For instance, in~\cite{kn:Amin2010_water} water was pumped out of an irrigation system while the water level measurements were corrupted so that the attack remained stealthy. Since physical attacks are similar to the fault signals $f_k$ in~\eqref{eq:plant_state space_faults}, in the following sections we consider $f_k$ to be the physical attack modifying the plant dynamics as
\begin{align*}
x_{k+1}&=Ax_k+B \tilde{u}_k + G w_k + Ff_k\\
y_k &= Cx_k.
\end{align*}

Considering $a_k = f_k$, the resulting system dynamics are described by~\eqref{eq:closed_loop_attacks} and~\eqref{eq:residual_dynamics_attack} with
\begin{equation*}
\begin{array}{llll}
\mathbf{B}=
\begin{bmatrix}
F\\
0
\end{bmatrix},\; &
\mathbf{D} = 0,\; &
\mathbf{B}_e = (I-KC)F,\; &
\mathbf{D}_e = VC(I-KC)F.
\end{array}
\end{equation*}
Note that the disruption resources in this attack are captured in the matrix $F$.

\subsubsection{Data Deception Resources}
The deception attacks modify the control actions $u_k$ and sensor measurements $y_k$ from their calculated or real values to the corrupted signals $\tilde{u}_k$ and $\tilde{y}_k$, respectively. Denoting $\mathcal{R}_I^u\subseteq \{1,\dots,q\}$ and $\mathcal{R}_I^y\subseteq \{1,\dots,p\}$ as the deception resources, i.e. set of actuator and sensor channels that can be affected, the deception attacks are modeled as
\begin{equation}\label{eq:deception_attack}
 \tilde{u}_k  :=  u_k + \Gamma^u b^u_k,\quad
 \tilde{y}_k  :=  y_k + \Gamma^y b^y_k,
\end{equation}
where the signals $b^u_k \in \mathbb{R}^{|\mathcal{R}_I^{u}|}$ and $b^y_k \in \mathbb{R}^{|\mathcal{R}_I^{y}|}$ represent the data corruption and $\Gamma^{u}\in\mathbb{B}^{ q \times |\mathcal{R}_I^{u}|}$ and $\Gamma^{y}\in\mathbb{B}^{ p \times |\mathcal{R}_I^{y}|}$ ($\mathbb{B} := \left \{0, \, 1 \right \}$) are the binary incidence matrices mapping the data corruption to the respective data channels. The matrices $\Gamma^{u}$ and $\Gamma^{y}$ indicate which data channels can be accessed by the adversary and are therefore directly related to the adversary resources in deception attacks.

Defining  $a_k = [b^{u\top}_{k} \quad b^{y\top}_{k+1}\quad b^{y\top}_{k} ]^\top$, the system dynamics are given by~\eqref{eq:closed_loop_attacks} and~\eqref{eq:residual_dynamics_attack} with
\begin{equation*}
\begin{array}{l}
\mathbf{B}=
\begin{bmatrix}
B\Gamma^u   & 0 &   B D_c\Gamma^y   \\
0           & 0 &   B_c\Gamma^y
\end{bmatrix}, \quad
\mathbf{D} = \begin{bmatrix}
0   & 0 &   \Gamma^y   \\
\end{bmatrix}, \quad
\mathbf{B}_e = \begin{bmatrix} (I-KC)B\Gamma^u & -K\Gamma^y & 0 \end{bmatrix},\quad
\mathbf{D}_e = \begin{bmatrix} VC(I-KC)B\Gamma^u & V(I-CK)\Gamma^y & 0 \end{bmatrix}.
\end{array}
\end{equation*}

Note that deception attacks do not possess any disclosure capabilities, as depicted in Figure~\ref{fig:attack_space} for examples of deception attacks such as the bias injection attack.

\section{Attack Scenarios}\label{sec:attack_scenarios}

In this section we discuss the general goal of an adversary and likely choices of the attack policy $g(\cdot,\cdot)$. In particular, using the framework introduced in the previous sections, we consider several attack scenarios where the adversary's goal is to drive the system to an unsafe state while remaining stealthy. For each scenario we formulate the corresponding stealthy attack policy, comment on the attack's performance, and describe the adversary's capabilities along each dimension of the attack space in Figure~\ref{fig:attack_space}, namely the disclosure resources, disruption resources, and system knowledge. A set of these scenarios is illustrated by experiments on a process control testbed in Section~\ref{sec:experiments}.

\subsection{Attack Goals and Constraints}
In addition to the attack resources, the attack scenarios need to also include the intent of the adversary, namely the attack goals and constraints shaping the attack policy. The attack goals can be stated in terms of the attack impact on the system operation, while the constraints may be related to the attack detectability.

Several physical systems have tight operating constraints which if not satisfied might result in physical damage to the system. In this work we use the concept of safe regions to characterize the safety constraints.

\begin{definition}\label{def:safe_set}
 At a given time instant $k$, the system is said to be safe if $x_{k}\in \mathcal{S}_x$, where $\mathcal{S}_x$ is a closed and compact set with non-empty interior.
\end{definition}

\begin{assumption}
The system is in a safe state at the beginning of the attack, i.e. $x_{k_0}\in \mathcal{S}_x$.
\end{assumption}

The physical impact of an attack can be evaluated by assessing whether or not the state of the system remained in the safe set during and after the attack. The attack is considered successful if the state is driven out of the safe set.

Regarding the attack constraints, we consider that attacks are constrained to remain stealthy. Furthermore, we consider the disruptive attack component consists of only physical and data deception attacks, and thus we have the attack vector $a_k = [f_k^\top \quad b^{u\top}_{k} \quad b^{y\top}_{k+1}\quad b^{y\top}_{k} ]^\top$. Given the anomaly detector described in Section~\ref{sec:cps} and denoting $\mathcal{A}_{k_0}^{k_f}=\{a_{k_0},\, \dots,\, a_{k_f}\}$ as the attack signal, the set of stealthy attacks are defined as follows.
\begin{definition}\label{def:stealthy}
The attack signal $\mathcal{A}_{k_0}^{k_f}$ is stealthy if $\|r_k\| < \delta_r + \delta_\alpha,\; \forall k\geq k_0$.
\end{definition}
Note that the above definition is dependent on the initial state of the system at $k_0$, as well as the noise terms $w_k$ and $v_k$.

Since the closed-loop system~\eqref{eq:closed_loop_attacks} and the anomaly detector~\eqref{eq:residual_dynamics_attack} under linear attack policies are linear systems, each of these systems can be separated into two components, the nominal component with $a_k=0 \; \forall k$ and the following systems
\begin{equation}\label{eq:closed_loop_attacks_linear}
\begin{aligned}
\eta_{k+1}^a
& =
\mathbf{A} \eta_k^a +
\mathbf{B} a_k\\
\tilde{y}^a_k &= \mathbf{C}\eta_{k}^a + \mathbf{D} a_k
\end{aligned}
\end{equation}
and
\begin{equation}\label{eq:residual_dynamics_attack_linear}
\begin{aligned}
\xi_{k|k}^a & = \mathbf{A}_e\xi_{k-1|k-1}^a +\mathbf{B}_e a_{k-1} \\
r^a_k  & = \mathbf{C}_e \xi_{k-1|k-1}^a   + \mathbf{D}_e a_{k-1},
\end{aligned}
\end{equation}
with $\eta_0^a = \xi_{0|0}^a=0$.

Assuming the system is behaving nominally before the attack, using the triangle inequality and linearity of~\eqref{eq:residual_dynamics_attack} we have $||r^a_k||\leq \delta_\alpha \Rightarrow ||r_k||\leq\delta_r + \delta_\alpha$, leading to the following definition:
\begin{definition}\label{def:alpha_stealthy}
The attack signal $\mathcal{A}_{k_0}^{k_f}$ is $\alpha-$stealthy with respect to $\mathcal{D}$ if $||r^a_k|| < \delta_\alpha, \; \forall k\geq k_0$.
\end{definition}
Albeit more conservative than Definition~\ref{def:stealthy}, this definition only depends on the attack signals $\mathcal{A}_{k_0}^{k_f}$. Similarly, the impact of attacks on the closed-loop system can also be analyzed by looking at the linear system~\eqref{eq:closed_loop_attacks_linear}, as illustrated in Section~\ref{sec:attack_bias} for the bias injection attack.

\subsection{Denial-of-Service Attack}
The DoS attacks prevent the actuator and sensor data from reaching their respective destinations and should therefore be modeled as the absence of data, for instance $u_k = \emptyset$ if all the actuator data is unavailable. However such a model would not fit the framework in~\eqref{eq:closed_loop_attacks} and~\eqref{eq:residual_dynamics_attack} where $a_k$ is assumed to be a real valued vector. Hence we consider instead one of the typical mechanisms used by digital controllers to deal with the absence of data~\cite{kn:SchenatoTAC2009}, in which the absent data is replaced with the last received data, $u_{\tau_u}$ and $y_{\tau_y}$ respectively. Denoting $\mathcal{R}_A^u\subseteq \{1,\dots,q\}$ and $\mathcal{R}_A^y\subseteq \{1,\dots,p\}$ as the set of actuator and sensor channels that can be made unavailable, we can model DoS attacks as deception attacks in~\eqref{eq:deception_attack} with
\begin{equation}\label{eq:DoS_attack}
\begin{matrix}
b^u_k & :=  -S^u_k \Gamma^{u\top} (u_k - u_{\tau_u})\\
b^y_k & :=  -S^y_k \Gamma^{y\top} (y_k - y_{\tau_y})
\end{matrix}
\end{equation}
where $S^u_k\in\mathbb{B}^{|\mathcal{R}_A^{u}| \times |\mathcal{R}_A^{u}|}$ and $S^y_k\in\mathbb{B}^{|\mathcal{R}_A^{y}|\times |\mathcal{R}_A^{y}|}$ are boolean diagonal matrices where the $i-$th diagonal entry indicates whether a DoS attack is performed ($[S^{(\cdot)}_k]_{ii}=1$) or not ($[S^{(\cdot)}_k]_{ii}=0$) on the corresponding channel. Therefore DoS attacks on the data are a type of disruptive attacks, as depicted in Figure~\ref{fig:attack_space}.

\textbf{Attack policy: }
The attack scenario analyzed in this paper considers a Bernoulli adversary~\cite{AminCardenasSastry-HSCC-2009} on the sensor channels following the random policy
\begin{equation*}
\begin{aligned}
\mathbb{P}([S^{y}_k]_{ii} = 1) &= 0,\; \forall i=1,\dots, |\mathcal{R}_A^{u}|,\quad  k < k_0  \\
\mathbb{P}([S^{y}_k]_{ii} = 1) &= p,\; \forall i=1,\dots, |\mathcal{R}_A^{u}|,\quad  k\geq k_0
\end{aligned}
\end{equation*}
where $p$ is the probability of blocking the data packet at any given time.

\textbf{Attack performance: }
Although the absence of data packets is not stealthy since it is trivially detectable, DoS attacks may be misdiagnosed as a poor network condition. As for the impact on the closed-loop system, the results available for Bernoulli packet losses readily apply to the current attack scenario~\cite{kn:Zhang2001_NCS,kn:Schenato07foundationsNCS,kn:SchenatoTAC2009}. In particular, we recall a result for the case where a hold scheme~\eqref{eq:DoS_attack} is used in the absence of data.

\begin{proposition}[Theorem 8 in~\cite{kn:Zhang2001_NCS}]\label{thm:DoS_stability}
Assume the closed-loop system with no DoS attack is stable. Then the closed-loop system with Bernoulli DoS attacks is exponentially stable for $p\in[0,\; 1)$ if the open-loop system
\[
\eta_{k+1}=\begin{bmatrix}
A & BC_c\\
0 & A_c
\end{bmatrix}\eta_{k}
\]
is marginally stable.
\end{proposition}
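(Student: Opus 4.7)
The plan is to cast the closed-loop system under Bernoulli DoS with the hold scheme as a Markov jump linear system (MJLS) with two modes, corresponding to a successful reception (probability $1-p$) and a dropped packet (probability $p$), and then to invoke standard mean-square exponential stability results for MJLS. First I would augment the state with the last received measurement $y_{\tau_y}$ so that the resulting system is time-homogeneous Markovian with respect to the DoS indicator. In the success mode the augmented dynamics reduce to the nominal closed-loop $\eta_{k+1}=\mathbf{A}\eta_k$ together with a refresh of the held measurement, while in the DoS mode the homogeneous part of the dynamics on $(x_k,z_k)$ is driven by the block upper-triangular matrix $\tilde{\mathbf{A}} := \begin{bmatrix} A & BC_c \\ 0 & A_c \end{bmatrix}$, plus a forcing term that is linear in the held measurement and therefore bounded by past values of the state.

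Second, I would seek mode-dependent Lyapunov matrices $P_0,P_1\succ 0$ satisfying a coupled Lyapunov inequality of the form
\[
(1-p)\,\mathbf{A}^\top\bigl[(1-p)P_0+pP_1\bigr]\mathbf{A}\;+\;p\,\tilde{\mathbf{A}}^\top\bigl[(1-p)P_0+pP_1\bigr]\tilde{\mathbf{A}}\;\prec\;P_0,
\]
and an analogous inequality for $P_1$. Schur stability of $\mathbf{A}$, which is equivalent to the nominal closed-loop being stable, provides a Lyapunov certificate for the success mode, whereas marginal stability of $\tilde{\mathbf{A}}$ guarantees that trajectories in the DoS mode grow at most polynomially. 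A continuity argument in $p$, starting at $p=0$ (where the nominal Lyapunov function trivially works) and exploiting convexity of the averaged one-step Lyapunov increment as a function of $p$, would extend the construction to every $p\in[0,1)$.

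Third, I would close the argument by handling the forcing term generated by the held measurement: since it is itself a linear functional of past states, a Gronwall-type estimate combined with the Borel--Cantelli consequence that the success mode is visited with strictly positive long-run frequency $1-p>0$ yields $\mathbb{E}\|\eta_k\|^2\leq c\rho^k$ for some $\rho<1$. The main obstacle is that $\tilde{\mathbf{A}}$ is only \emph{marginally}, not strictly, stable, which precludes a direct contraction inside the DoS mode; the hypothesis $p<1$ is precisely what restores contraction on average, by ensuring that the recovery mode occurs infinitely often so that the spectral radius of the expected one-step Lyapunov operator on the MJLS is strictly less than one. Pushing this through in detail reduces to the MJLS stability machinery already established in Zhang 2001.
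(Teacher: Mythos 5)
The paper itself offers no proof of this proposition: it is imported verbatim as Theorem~8 of Zhang et al.\ (2001), where the argument runs through the theory of asynchronous dynamical systems with rate constraints --- a single \emph{common} quadratic Lyapunov function $V(\eta)=\eta^\top P\eta$ that contracts strictly in the receive mode and does not increase in the drop mode, combined with a rate condition of the form $\alpha_1^{1-p}\alpha_2^{p}>1$. Your route through Markov jump linear systems and mean-square stability is therefore genuinely different from the cited one, but it has a gap at precisely the step that carries the whole burden of the claim.

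The gap is the sentence ``a continuity argument in $p$ \dots exploiting convexity of the averaged one-step Lyapunov increment \dots would extend the construction to every $p\in[0,1)$.'' Continuity from $p=0$ only buys stability for sufficiently small $p$, and the spectral radius of the second-moment operator $\mathcal{L}_p(X)=(1-p)\mathbf{A}^\top X\mathbf{A}+p\tilde{\mathbf{A}}^\top X\tilde{\mathbf{A}}$ is not a convex (or even subadditive) function of $p$, so knowing $\rho(\mathcal{L}_0)<1$ and $\rho(\mathcal{L}_1)=1$ says nothing about intermediate values. In fact the mean-square statement is false in the generality you use it: take $\tilde{\mathbf{A}}=\left[\begin{smallmatrix}0&a\\1/a&0\end{smallmatrix}\right]$ (eigenvalues $\pm1$, hence marginally stable, but with a large singular value) and $\mathbf{A}=\epsilon\left[\begin{smallmatrix}0&1/a\\a&0\end{smallmatrix}\right]$ (Schur); a direct computation on diagonal $X$ gives $\rho(\mathcal{L}_{1/2})\approx\tfrac{1}{2}\epsilon a^{2}$, which exceeds one for $\epsilon a^{2}>2$. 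What rescues the result --- and what your proof must supply --- is a \emph{common} $P\succ0$ with $\mathbf{A}^\top P\mathbf{A}\prec P$ and $\tilde{\mathbf{A}}^\top P\tilde{\mathbf{A}}\preceq P$; with such a $P$ the averaged inequality $(1-p)\mathbf{A}^\top P\mathbf{A}+p\tilde{\mathbf{A}}^\top P\tilde{\mathbf{A}}\prec P$ holds for every $p<1$ and your MJLS machinery closes immediately. Marginal stability of $\tilde{\mathbf{A}}$ only guarantees \emph{some} $P_2$ with $\tilde{\mathbf{A}}^\top P_2\tilde{\mathbf{A}}\preceq P_2$, not that it coincides with the Lyapunov matrix certifying $\mathbf{A}$; this commonality is the hypothesis under which the cited Theorem~8 is actually established. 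Separately, your Gronwall/Borel--Cantelli treatment of the held data is unnecessary: augmenting the state once with the held values reduces the hold scheme to a pure two-mode switched linear system, which is how the reference proceeds.
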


\textbf{Disclosure resources: }
Although the proposed model of DoS attacks in \eqref{eq:DoS_attack} contains the control and output signals, note that no disclosure resources are needed in the actual implementation of the attack. Thus we have $\mathcal{R}^{u}=\mathcal{R}^{y}=\emptyset$.

\textbf{Disruption resources: }
The disruption capabilities correspond to the data channels that the adversary is able to make unavailable, $\mathcal{R}_A^{u}$ and $\mathcal{R}_A^{y}$.

\textbf{System knowledge: }
For the Bernoulli attack policy, no \emph{a priori} knowledge of the system model is needed.

\subsection{Replay Attack}\label{sec:replay_attack}
In replay attacks the adversary first performs a disclosure attack from $k=k_0$ until $k_r$, gathering sequences of data $\mathcal{I}_{k_r}$, and then begins replaying the recorded data at time $k=k_r+1$ until the end of the attack at $k=k_f>k_r$, as illustrated in Figure~\ref{fig:replay_attack}. In the scenario considered here the adversary is also able to perform a physical attack while replaying the recorded data, which covers the experiment on a water management SCADA system reported in~\cite{kn:Amin2010_water} and one of Stuxnet's operation mode~\cite{kn:symantec2010_report}.
\begin{figure}[ht]
  \centering
\subfigure[Phase I of the replay attack~\eqref{eq:replay_policy0}.]{
\def\svgwidth{6.7cm}
    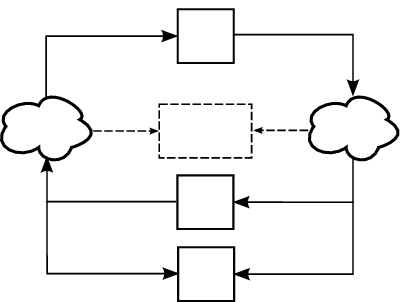
\label{fig:replay_attack1}
}
\vspace{2pt}
\subfigure[Phase II of the replay attack~\eqref{eq:replay_policy}.]{
\def\svgwidth{6.7cm}
    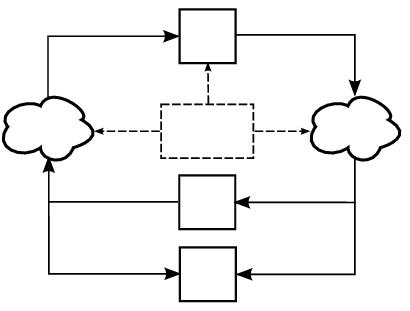
\label{fig:replay_attack2}
}
\caption{Schematic of the replay attack.\label{fig:replay_attack}}
\end{figure}

\textbf{Attack policy: }
Similar to the work in~\cite{kn:Bruno09}, assuming $\mathcal{R}^{(\cdot)} = \mathcal{R}_I^{(\cdot)}$ i.e., the adversary can corrupt the digital channels from which the data sequences are gathered, the replay attack policy can be described as
\begin{equation}\label{eq:replay_policy0}
\mbox{Phase I: }\; \left\{\begin{aligned}
a_k &= 0\\
\mathcal{I}_{k} &= \mathcal{I}_{k-1} \cup
\left\{
\begin{bmatrix}
\Upsilon^u  &   0 \\
0           &   \Upsilon^y
\end{bmatrix}
\begin{bmatrix}
u_{k} \\
y_{k}
\end{bmatrix} \right\},
\end{aligned}\right.
\end{equation}
with $k_0 \leq k\leq k_r$ and $\mathcal{I}_{k_0}=\emptyset$ and
\begin{equation}\label{eq:replay_policy}
\mbox{Phase II: }\; \left\{\begin{aligned}
a_k  &=
\begin{bmatrix}
g_f(\mathcal{K}, \mathcal{I}_{k_r}) \\
\Upsilon^u (u_{k - T} - u_k)\\
\Upsilon^y (y_{k+1 - T}-y_{k+1}) \\
\Upsilon^y (y_{k - T}-y_k)
\end{bmatrix}\\
\mathcal{I}_{k} &= \mathcal{I}_{k-1},
\end{aligned}\right.
\end{equation}
where $T=k_r-1 + k_0$ and $ k_r+1 \leq k\leq k_f$.
An interesting instance of this attack scenario consists of applying a pre-defined physical attack to the plant, while using replay attacks to render the attack stealthy. In this case the physical attack signal $f_k$ corresponds to an open-loop signal, $f_k = g_f(k)$.

\textbf{Attack performance: }
The work in~\cite{kn:Bruno09} provided conditions under which replay attacks with access to all measurement data channels are stealthy. However, these attacks are not guaranteed to be stealthy when only a subset of the data channels is attacked. In this case, the stealthiness constraint may require additional knowledge of the system model. For instance, the experiment presented in Section~\ref{sec:experiments} requires knowledge of the physical system structure, so that $f_k$ only excites the attacked measurements. Hence $f_k$ can be seen as a zero-dynamics attack with respect to the healthy measurements, which is characterized in the section below. Since the impact of the replay attack is dependent only on $f_k$, we refer the reader to Section~\ref{sec:attack_zero} for a characterization of the replay attack's impact.

\textbf{Disclosure resources: }
The disclosure capabilities required to stage this attack correspond to the data channels that can be eavesdropped by the attacks, namely $\mathcal{R}^{u}$ and $\mathcal{R}^{y}$.

\textbf{Disruption resources: }
In this case the deception capabilities correspond to the data channels that the adversary can tamper with, $\mathcal{R}_I^{u}$ and $\mathcal{R}_I^{y}$. In particular, for replay attacks the adversary can only tamper with the data channels from which data has been previously recorded, i.e. $\mathcal{R}_I^{u} \subseteq \mathcal{R}^{u}$ and $\mathcal{R}_I^{y}\subseteq \mathcal{R}^{y}$.

Direct disruption of the physical system through the signal $f_k$ depends on having direct access to the physical system, modeled by the matrix $F$ in~\eqref{eq:plant_state space_faults}.

\textbf{System knowledge: }
Note that no \emph{a priori} knowledge $\mathcal{K}$ on the system model is needed for the cyber component of the attack, namely the data disclosure and deception attack, as seen in the attack policy~\eqref{eq:replay_policy0} and~\eqref{eq:replay_policy}. As for the physical attack, $f_k$, the required knowledge is scenario dependent. In the scenario considered in the experiments described in Section~\ref{sec:experiments}, this component was modeled as an open-loop signal, $f_k = g_f(k)$.

\subsection{Zero-Dynamics Attack}\label{sec:attack_zero}
Recalling that for linear attack policies the plant and the anomaly detector are linear systems, \eqref{eq:closed_loop_attacks_linear} and~\eqref{eq:residual_dynamics_attack_linear} respectively, Definition~\ref{def:alpha_stealthy} states that this type of attacks are $0-$stealthy if $r^a_k=0,\, k=k_0,\dots,k_f$. The idea of $0-$stealthy attacks then consists of designing an attack policy and attack signal $\mathcal{A}_{k_0}^{k_f}$ so that the residue $r_k$ does not change due to the attack.

A particular subset of $0-$stealthy attacks are characterized in the following lemma:
\begin{lemma}\label{lem:output_zeroing_attack}
The attack signal $\mathcal{A}_{k_0}^{k_f}$ is $0-$stealthy with respect to any $\mathcal{D}$ if $\tilde{y}^a_k = 0, \, \forall k\geq k_0$.
\end{lemma}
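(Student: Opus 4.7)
The plan is to exploit the linearity decomposition introduced in~\eqref{eq:closed_loop_attacks_linear}--\eqref{eq:residual_dynamics_attack_linear} and trace how the hypothesis $\tilde{y}^a_k=0$ propagates, first through the controller and then through the detector, using the fact that both subsystems are initialized at zero in the attack-only component.

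First, I will examine the controller in the attack-only closed-loop system. Its only exogenous input is $\tilde{y}^a_k$. Applying the state-space representation~\eqref{eq:controller_space_state} with zero initial controller state, a straightforward induction on $k$ using $z^a_{k+1}=A_c z^a_k + B_c \tilde{y}^a_k$ together with $\tilde{y}^a_k\equiv 0$ yields $z^a_k=0$ for all $k\geq k_0$, and consequently $u^a_k = C_c z^a_k + D_c \tilde{y}^a_k = 0$ for all $k\geq k_0$.

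Next, I will plug these two facts into the detector~\eqref{eq:residual_dynamics}, whose only exogenous inputs are $\tilde{y}_k$ and $u_{k-1}$. Eliminating $\hat{y}^a_{k|k-1}$ gives the attack-only recursion $\hat{x}^a_{k|k} = (I-KC)A\hat{x}^a_{k-1|k-1} + (I-KC)Bu^a_{k-1} + K\tilde{y}^a_k$. Since the first-step value $\hat{x}^a_{k_0|k_0}=0$ (as in~\eqref{eq:residual_dynamics_attack_linear}) and the driving terms $u^a_{k-1}$ and $\tilde{y}^a_k$ both vanish by the previous step, a second induction delivers $\hat{x}^a_{k|k}=0$ for all $k\geq k_0$. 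Therefore $r^a_k = V(\tilde{y}^a_k - C\hat{x}^a_{k|k}) = 0$ for all $k\geq k_0$, so in particular $\|r^a_k\|< \delta_\alpha$ for every $\delta_\alpha>0$, which is the $\alpha$-stealthiness condition of Definition~\ref{def:alpha_stealthy} with $\alpha=0$.

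Because the derivation never invoked particular choices of the detector gains $K$ or $V$, it holds uniformly across the whole class of observer-based detectors of the form~\eqref{eq:residual_dynamics}, establishing the "for any $\mathcal{D}$" part of the statement. I do not expect any substantive obstacle: the lemma is essentially the statement that a zero input applied to a linear system with zero initial condition produces a zero output, invoked once for the controller and once for the detector. The only point I would be careful about is to make the decomposition~\eqref{eq:closed_loop_attacks_linear}--\eqref{eq:residual_dynamics_attack_linear} explicit at the start, so that the zero initial conditions used in both induction arguments are unambiguously justified.
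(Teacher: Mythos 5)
Your proposal is correct and follows essentially the same route as the paper's own proof: decompose into the attack-only linear components with zero initial conditions, note that $\tilde{y}^a_k\equiv 0$ forces $u^a_k\equiv 0$ through the controller, and then that both detector inputs being zero forces $r^a_k\equiv 0$. The only difference is that you spell out the two induction arguments explicitly, which the paper leaves implicit.
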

\begin{proof}
Consider the attacked components of the controller and the anomaly detector in~\eqref{eq:closed_loop_attacks_linear} and~\eqref{eq:residual_dynamics_attack_linear} with $\hat{x}^a_0=\xi_{0|0}^a=0$. From the controller dynamics it directly follows that $\tilde{y}^a_k = 0, \, \forall k\geq k_0$ results in $u_k^a = 0, \, \forall k\geq k_0$, as the input to the controller ($\tilde{y}^a_k$) is zero. Since $\hat{x}^a_0=0$ and $\tilde{y}^a_k = u_k^a = 0, \, \forall k\geq k_0$, meaning that the detector's inputs are zero, we then conclude $r^a_k = 0, \, \forall k\geq k_0$.
\end{proof}

Both the definition of $0-$stealthy attacks and Lemma~\ref{lem:output_zeroing_attack} indicate that these attacks are decoupled from the outputs of linear systems, $r_k$ and $y_k$ respectively. Hence finding $0-$stealthy attack signals relates to the output zeroing problem or zero-dynamics studied in the control theory literature~\cite{kn:Zhou1996}. Note that such an attack requires the perfect knowledge of the plant dynamics $P$ and the attack signal is then based on the open-loop prediction of the output changes due to the attack. This is illustrated in Figure~\ref{fig:zero_attack} where $\mathcal{K}_z$ denote the zero-dynamics and there is no disclosure of sensor or actuator data.

\begin{figure}[ht]
 \def\svgwidth{6.7cm}
  \centering
  \input{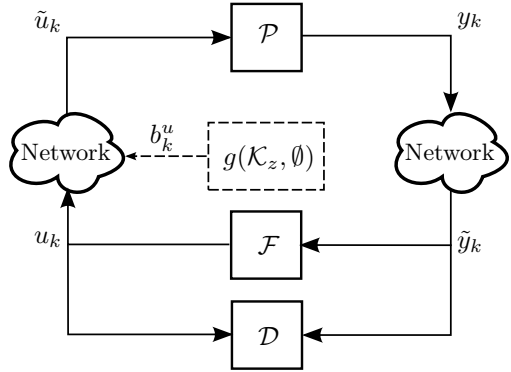}
  \caption{Schematic of the zero-dynamics attack.\label{fig:zero_attack}}
\end{figure}

\textbf{Attack policy: }
The attack policy then corresponds to the input sequence ($a_k$) that makes the outputs of the process ($\tilde{y}^a_k$) identically zero for all $k$ and is illustrated in Figure~\ref{fig:zero_attack}. It can be shown~\cite{kn:Zhou1996} that the solution to this problem is given by the sequence
\begin{equation} \label{zero_sequence}
a_k = g \nu^k,
\end{equation}
parameterized by the input-zero direction $g$ and the system zero $\nu$.

For sake of simplicity we consider a particular instance of this attack, where only the actuator data is corrupted.
In this case the zero attack policy corresponds to the transmission zero-dynamics of the plant. The plant dynamics due to an attack on the actuator data are described by
\begin{equation}\label{eq:transmission_system}
\begin{aligned}
x_{k+1}^a &= A x_k^a + B a_k\\
\tilde{y}^a_k &= C x_k^a
\end{aligned}
\end{equation}
with $a_k = b^u_k$. Given the discrete-time system~\eqref{eq:transmission_system} with $B$ having full column rank, the transmission zeros can be calculated as the values $\nu \in \mathbb{C}$ that cause the matrix $P(\nu)$ to lose rank, where
\begin{equation*}
P(\nu)=\begin{bmatrix} \nu I - A & -B \\ C & 0 \end{bmatrix}.
\end{equation*}
Those values are called minimum phase or non-minimum phase zeros depending on whether they are stable or unstable zeros, respectively. In discrete-time systems a zero is stable if $|\nu| < 1$ and unstable otherwise.

The input zero direction can be obtained by solving the following equation
\begin{equation}\label{eq:zero_dynamics}
\begin{bmatrix} \nu I-A & -B \\ C & 0 \end{bmatrix} \begin{bmatrix} x_0 \\ g \end{bmatrix} = \begin{bmatrix} 0 \\ 0 \end{bmatrix},
\end{equation}
where $x_0$ is the initial state of the system for which the input sequence~\eqref{zero_sequence} results in an identically zero output, $\tilde{y}^a_k=0\,\forall k$.

\begin{lemma}\label{lem:zero_dynamics_invariance}
 Let $x_0$ be the initial state of the system, where $x_0$ satisfies~\eqref{eq:zero_dynamics}. The state trajectories generated by the zero-dynamics attack are contained in $\mbox{span}(x_0)$ i.e., $x_{k}^a\in \mbox{span}(x_0)\; \forall k\geq0$.
\end{lemma}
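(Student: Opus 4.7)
The plan is to prove a slightly stronger claim by induction, namely that $x_k^a = \nu^k x_0$ for all $k \geq 0$, which immediately yields the lemma since each $\nu^k x_0$ lies in $\mbox{span}(x_0)$.

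First I would unpack the zero-dynamics equation \eqref{eq:zero_dynamics} into its two component identities,
\begin{equation*}
(\nu I - A)x_0 - Bg = 0, \qquad C x_0 = 0,
\end{equation*}
and rewrite the first as $A x_0 + B g = \nu x_0$. This is the algebraic fact that drives the whole argument. The base case $k=0$ is immediate from the hypothesis that the system is initialized at $x_0^a = x_0$.

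For the inductive step, assume $x_k^a = \nu^k x_0$. Substituting into the attacked plant dynamics \eqref{eq:transmission_system} with $a_k = g \nu^k$ from \eqref{zero_sequence} gives
\begin{equation*}
x_{k+1}^a = A x_k^a + B a_k = \nu^k (A x_0 + B g) = \nu^{k+1} x_0,
\end{equation*}
which completes the induction and shows $x_k^a \in \mbox{span}(x_0)$ for every $k \geq 0$.

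I do not expect any real obstacle here: the result is essentially just the observation that $(x_0, g, \nu)$ is an eigenstructure for the closed-loop attacked dynamics by construction of the zero-dynamics equation, so the state trajectory stays on the one-dimensional invariant subspace spanned by $x_0$. The only point worth noting is that the argument tacitly relies on initializing $x_0^a = x_0$, which is the standing convention for the zero-direction construction leading to $\tilde y_k^a \equiv 0$; without this initialization one would only obtain that trajectories are contained in the affine translate of $\mbox{span}(x_0)$ determined by the uncontrolled homogeneous response.
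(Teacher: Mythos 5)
Your proof is correct and follows essentially the same route as the paper's: both arguments reduce to the identity $Ax_0 + Bg = \nu x_0$ extracted from~\eqref{eq:zero_dynamics} and an induction showing $x_k^a = \nu^k x_0$ (the paper merely packages the identity by introducing a map $L$ with $Lx_0 = g$ so that $x_0$ is an eigenvector of $A+BL$). Your closing remark about the tacit initialization $x_0^a = x_0$ is a fair observation and consistent with the paper's own discussion of the attack's performance under zero initial conditions.
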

\begin{proof}
Consider the zero-dynamics attack parameterized by $x_0$ and $g$ and denote $L$ as a map for which $L x_0 = g$. Then from~\eqref{eq:zero_dynamics} we have $\left(\nu I -(A+BL)\right) x_0 = 0$ and conclude that $x_0$ is an eigenvector of $A+BL$ associated with its eigenvalue $\nu$. Now consider the state evolution under attack, $x^a_{k+1} = Ax^a_k + Bg$ with $x^a_0=x_0$. The proof is completed by noting that $x^a_1 = Ax_0 + Bg = (A+BL)x_0 = \nu x_0$ and applying an induction argument.
\end{proof}

\textbf{Attack performance: }
Note that the zero-dynamics attack is $0-$stealthy only if $x^a_0 = x_0$. However the initial state of the system under attack $x^a_0$ is defined to be zero at the beginning of the attack. Therefore stealthiness of the attack may be violated for large differences between $x^a_0=0$ and $x_0$. We refer the reader to~\cite{Teixeira_Allerton2012} for a detailed analysis of the effects of zero initial conditions on zero-dynamics attacks.

If the zero is stable, that is $| \nu  | < 1$, the attack will asymptotically decay to zero, thus having little effect on the plant. However, in the case of unstable zeros the attack grows geometrically, which could cause a great damage to the process. This statement is captured in the following result.

\begin{theorem}\label{thm:zero_unsafe}
A zero-dynamics attack with $| \nu  | > 1$ leads the system to an unsafe state if and only if $\mbox{span}(x_0)$ is not contained in $\mathcal{S}_x$.
\end{theorem}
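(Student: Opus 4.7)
The plan is to combine the one-dimensional invariance provided by Lemma~\ref{lem:zero_dynamics_invariance} with the geometric growth induced by $|\nu|>1$ and the boundedness of $\mathcal{S}_x$. Both implications of the theorem should follow almost immediately once the trajectory is identified with scalar multiples of $x_0$.

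First I would dispatch the ``only if'' direction by contrapositive. Assume $\mbox{span}(x_0)\subseteq\mathcal{S}_x$. By Lemma~\ref{lem:zero_dynamics_invariance}, the state under attack satisfies $x_k^a\in\mbox{span}(x_0)$ for all $k\ge 0$, hence $x_k^a\in\mathcal{S}_x$ for all $k$, so the system is never driven out of the safe set. This uses only the invariance result and nothing about the magnitude of $\nu$.

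For the ``if'' direction, assume $\mbox{span}(x_0)\not\subseteq\mathcal{S}_x$, which in particular forces $x_0\neq 0$. From the proof of Lemma~\ref{lem:zero_dynamics_invariance}, $x_0$ is an eigenvector of $A+BL$ associated with eigenvalue $\nu$, so the closed-loop evolution $x_{k+1}^a=(A+BL)x_k^a$ with $x_0^a=x_0$ yields $x_k^a=\nu^k x_0$. Since $\mathcal{S}_x$ is compact, it is bounded, so parameterizing the line $\mbox{span}(x_0)$ by $\lambda\mapsto\lambda x_0$ produces a finite radius $R>0$ such that $\lambda x_0\in\mathcal{S}_x$ implies $|\lambda|\le R$. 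Because $|\nu|>1$, one has $|\nu|^k\to\infty$, so there is a finite $k^\star\ge k_0$ with $|\nu|^{k^\star}>R$, at which time $x_{k^\star}^a=\nu^{k^\star}x_0\notin\mathcal{S}_x$, establishing that the attack reaches an unsafe state.

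The proof is mostly bookkeeping once the invariance lemma is in hand; the main subtlety (and likely the only real obstacle) is the case of complex transmission zeros, where $\nu$ and $x_0$ belong to $\mathbb{C}$ and a real-valued attack must be constructed from the conjugate pair $(\nu,\bar{\nu})$. In that situation $\mbox{span}(x_0)$ should be interpreted as the two-dimensional real invariant subspace spanned by $\mbox{Re}(x_0)$ and $\mbox{Im}(x_0)$, and the growth argument is repeated on the real trajectory: its norm still grows like $|\nu|^k\|x_0\|$, which combined with boundedness of $\mathcal{S}_x$ yields exit in finite time exactly as above.
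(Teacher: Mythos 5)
Your proof is correct and follows essentially the same route as the paper's: the paper's own argument is a one-line appeal to Lemma~\ref{lem:zero_dynamics_invariance} plus the observation that $|\nu|>1$ produces an unstable trajectory along $\mbox{span}(x_0)$, which is exactly the invariance-plus-geometric-growth argument you spell out (your contrapositive for the ``only if'' direction and the compactness/boundedness step for the ``if'' direction are just the details the paper leaves implicit). Your closing remark on complex zeros addresses a case the paper's proof silently ignores, but it does not change the approach.
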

\begin{proof}
Follows directly from Lemma~\ref{lem:zero_dynamics_invariance} and from the fact that the zero-attack with $|\nu| >1$ generates an unstable state trajectory moving away from the origin along $\mbox{span}(x_0)$.
\end{proof}

\textbf{Disclosure resources: }
This attack scenario considers an open-loop attack policy and so no disclosure capabilities are required, resulting in $\mathcal{R}^{u}=\mathcal{R}^{y}=\emptyset$ and $\mathcal{I}^u_{k}=\mathcal{I}^y_{k}=\emptyset \,\forall k$.

\textbf{Disruption resources: }
The disruption capabilities in this attack scenario correspond to the ability of performing deception attacks on the actuator data channels. Therefore the required resources are $\mathcal{R}_I^{u}=\{1,\dots,q\}$, $\mathcal{R}_I^{y}=\emptyset$, and $F=0$

\textbf{System knowledge: }
The ability to compute the open-loop attack policy requires the perfect knowledge zero-dynamics, which we denote as $\mathcal{K}_z$. Note that computing the zero-dynamics requires perfect knowledge of the plant dynamics, namely $A$, $B$, and $C$. No knowledge of the feedback controller or anomaly detector is assumed in this scenario.

\subsection{Local Zero-Dynamics Attack}
In the previous scenario the zero-dynamics attack was characterized in terms of the entire system. Here we further restrict the adversary resources by considering that the adversary has disruption resources and knows the model of only a subset of the system. In particular, we rewrite the plant dynamics~\eqref{eq:transmission_system} as
\begin{equation}\label{eq:transmission_system_local}
\begin{aligned}
\begin{bmatrix}
x_{k+1}^{1} \\
x_{k+1}^{2}
\end{bmatrix}
&=
\begin{bmatrix}
A_{11} & A_{12} \\
A_{21} & A_{22}
\end{bmatrix}
\begin{bmatrix}
x_{k}^{1} \\
x_{k}^{2}
\end{bmatrix}
+
\begin{bmatrix}
B_{1}\\
0
\end{bmatrix}
a_k\\
\tilde{y}^a_k &= \begin{bmatrix}
C_{1} & C_{2}
\end{bmatrix}
\begin{bmatrix}
x_{k}^{1} \\
x_{k}^{2}
\end{bmatrix}
\end{aligned}
\end{equation}
and assume the adversary has access to only $A_{11}$, $A_{21}$, $B_1$, and $C_1$. From the adversary's view, this local system is characterized by
\begin{equation*}
\begin{aligned}
x_{k+1}^1 &= A_{11} x_k^1 + B_1 a_k + A_{12}x_k^2\\
y^l_k &= \begin{bmatrix}
C_1 \\
A_{21}
\end{bmatrix}x_k^1,
\end{aligned}
\end{equation*}
where $y^l_k$ encodes the measurements depending on the local state, $C_1x^1_k$, and the interaction between the local subsystem and the remaining subsystems, $A_{21}x^1_k$.

\textbf{Attack policy: }
Similar to the zero-dynamics attack, the attack policy is given by the sequence
\begin{equation*}
a_k = g \nu^k,
\end{equation*}
where $g$ is the input zero direction for the chosen zero $\nu$. The input zero direction can be obtained by solving the following equation
\begin{equation*}
\begin{bmatrix} \nu I-A_{11} & -B_1 \\ C_1 & 0 \\ A_{21} & 0\end{bmatrix} \begin{bmatrix} x^1_0 \\ g^1 \end{bmatrix} = \begin{bmatrix} 0 \\ 0\\ 0 \end{bmatrix}.
\end{equation*}

Note that the zero-dynamics parameterized by $g^1$ and $\nu$ correspond to local zero-dynamics of the global system.

\textbf{Attack performance: }
A similar discussion as for the global zero-dynamics attack applies to this scenario. In particular, the stealthiness of the local zero-dynamics attack may be violated for large differences between $x^1_0$ and $0$. Additionally, as stated in Theorem~\ref{thm:zero_unsafe}, attacks associated with unstable zeros yielding $|\nu|>1$ are more dangerous and may lead the system to an unsafe state.

\textbf{Disclosure resources: }
This attack scenario considers an open-loop attack policy and so no disclosure capabilities are required, resulting in $\mathcal{R}^{u}=\mathcal{R}^{y}=\emptyset$ and $\mathcal{I}^u_{k}=\mathcal{I}^y_{k}=\emptyset \;\forall k$.

\textbf{Disruption resources: }
The disruption capabilities in this attack scenario correspond to the ability of performing deception attacks on the actuator data channels of the local subsystem. Therefore the required resources are $\mathcal{R}_I^{u}=\{1,\dots,q_1\}$, $\mathcal{R}_I^{y}=\emptyset$, and $F=0$.

\textbf{System knowledge: }
The open-loop attack policy requires the perfect knowledge of the local zero-dynamics, denoted as $\tilde{\mathcal{K}}_{z}$ and obtained from $A_{11}$, $B_1$, $C_1$, and $A_{21}$.

\subsection{Bias Injection Attack}\label{sec:attack_bias}
Here a particular scenario of false-data injection is considered, where the adversary's goal is to inject a constant bias in the system without being detected. For this scenario, the class of $\alpha-$stealthy attacks is characterized at steady-state and a method to evaluate the corresponding impact is proposed. Furthermore, we derive the policy yielding the largest impact on the system.

\textbf{Attack policy: }
The bias injection attack is illustrated in Figure~\ref{fig:bias_attack}. The attack policy is composed of a steady-state component, the desired bias denoted as $a_\infty$, and a transient component.
For the transient, we consider that the adversary uses a linear low-pass filter so that the data corruptions are slowly converging to the steady-state values. As an example, for a set of identical first-order filters the open-loop attack sequence is described by
\begin{equation} \label{eq:bias_sequence}
a_{k+1} = \beta a_k + (1-\beta)a_\infty^*,
\end{equation}
where $a_0 = 0$ and $0<\beta<1$ can be chosen using the results from Theorem~\ref{thm:bias_transient}. The steady-state attack policy yielding the maximum impact on the physical system is described below, where the computation of $a_\infty$ is summarized in Theorem~\ref{thm:bias_attack_2} and Theorem~\ref{thm:bias_attack_infinity}.

\begin{figure}[ht]
 \def\svgwidth{6.7cm}
  \centering
  \input{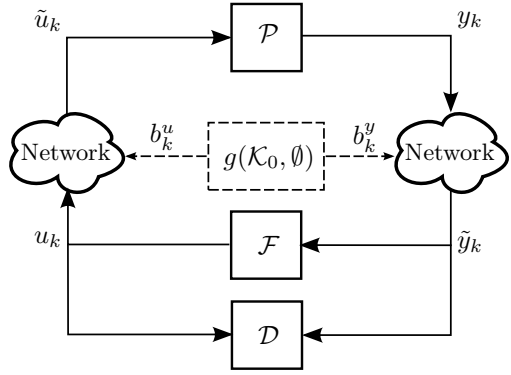}
  \caption{Schematic of the bias injection attack.\label{fig:bias_attack}}
\end{figure}

\textbf{Attack performance: }
First the steady-state policy is considered. Denote $a_{\infty}$ as the bias to be injected and recall the anomaly detector dynamics under attack~\eqref{eq:residual_dynamics_attack}. The steady-state detectability of the attack is then dependent on the steady-state value of the residual
\begin{equation*}
r^{a}_\infty =\left ( \mathbf{C}_e (I - \mathbf{A}_e)^{-1} \mathbf{B}_e + \mathbf{D}_e \right ) a_\infty=: G_{ra} a_\infty.
\end{equation*}
The largest  $\alpha-$stealthy attacks are then characterized by
\begin{equation} \label{r gap}
\left \| G_{ra} a_\infty \right \|_2 = \delta_{\alpha}.
\end{equation}
Although attacks satisfying \eqref{r gap} could be detected during the transient, incipient attack signals slowly converging to $a_{\infty}$ may go undetected, as it is stated in Theorem~\ref{thm:bias_transient} and shown in the experiments in Section~\ref{sec:experiments}.

The impact of such attacks can be evaluated using the closed-loop dynamics under attack given by~\eqref{eq:closed_loop_attacks}. Recalling that $\eta^a_k = [x_k^{a^\top} \quad z_k^{a^\top} ]^\top$, the steady-state impact on the state is given by
\begin{equation*}
x^{a}_\infty = \left[I\quad 0\right] \left (I - \mathbf{A} \right )^{-1} \mathbf{B} a_\infty =: G_{xa} a_\infty.
\end{equation*}

Consider the following safe set defined in terms of $x^a_k$.
\begin{definition}\label{def:safe_sets_2}
 The $2-$norm safe set $\mathcal{S}_{x^a}^2$ is defined as
 \[\mathcal{S}_{x^a}^2 = \left\{x\in\mathbb{R}^n :\, \|x\|_2^2 \leq 1 \right\},\]
 and the system is said to be in a safe state if $x^a_k\in\mathcal{S}_{x^a}^2$.
\end{definition}

For the $2-$norm safe set $\mathcal{S}_{x^a}^2$, the most dangerous bias injection attack corresponds to the $\alpha-$stealthy attack yielding the largest bias in the $2-$norm sense, which can be computed by solving
\begin{equation}\label{eq:max_impact_2}
\begin{matrix}
\underset{a_\infty}{\max} \left \| G_{xa}a_\infty \right \|^2_2 \\
\\
\mbox{s.t. } \; \; \; \; \left \| G_{ra}a_\infty \right \|^2_2 \leq \delta_{\alpha}^2.
\end{matrix}
\end{equation}

\begin{lemma}~\label{lem:bounded_bias}
The optimization problem~\eqref{eq:max_impact_2} is bounded if and only if $\ker(G_{ra})\subseteq\ker(G_{xa})$.
\end{lemma}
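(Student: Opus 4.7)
The plan is to prove both implications by analysing how the feasible set of~\eqref{eq:max_impact_2} interacts with the null spaces of $G_{ra}$ and $G_{xa}$. The key observation is that the feasible set is a centrally symmetric, possibly unbounded cylinder whose axis is exactly $\ker(G_{ra})$: every $v\in\ker(G_{ra})$ generates a ray $\{tv:t\in\mathbb{R}\}$ that is entirely feasible. Hence the boundedness of the maximum of $\|G_{xa}a_\infty\|_2^2$ should reduce to whether $G_{xa}$ vanishes along this axis.

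For the contrapositive of the ``only if'' direction, I would assume $\ker(G_{ra})\not\subseteq\ker(G_{xa})$ and pick some $v\in\ker(G_{ra})$ with $G_{xa}v\neq 0$. The sequence $a_\infty^{(t)}=tv$ satisfies $\|G_{ra}a_\infty^{(t)}\|_2^2=0\leq\delta_\alpha^2$ for all $t\in\mathbb{R}$, while the objective evaluates to $t^2\|G_{xa}v\|_2^2\to\infty$; this exhibits an unbounded feasible sequence.

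For the ``if'' direction, assume $\ker(G_{ra})\subseteq\ker(G_{xa})$ and decompose any feasible $a_\infty$ orthogonally as $a_\infty=a_\parallel+a_\perp$ with $a_\parallel\in\ker(G_{ra})$ and $a_\perp\in\ker(G_{ra})^\perp$. By the inclusion hypothesis $G_{xa}a_\parallel=0$, and by definition $G_{ra}a_\parallel=0$, so the constraint becomes $\|G_{ra}a_\perp\|_2\leq\delta_\alpha$ and the objective reduces to $\|G_{xa}a_\perp\|_2^2$. Since the restriction of $G_{ra}$ to $\ker(G_{ra})^\perp$ is injective, it has a strictly positive smallest nonzero singular value $\sigma_{\min}(G_{ra})$, yielding $\|a_\perp\|_2\leq\delta_\alpha/\sigma_{\min}(G_{ra})$. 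Combined with $\|G_{xa}a_\perp\|_2\leq\sigma_{\max}(G_{xa})\|a_\perp\|_2$, this gives the explicit upper bound
\begin{equation*}
\|G_{xa}a_\infty\|_2^2\leq\left(\frac{\sigma_{\max}(G_{xa})\,\delta_\alpha}{\sigma_{\min}(G_{ra})}\right)^2,
\end{equation*}
which proves boundedness.

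The main obstacle is not the algebra but being careful with the inclusion hypothesis in the ``if'' direction: one must exploit $\ker(G_{ra})\subseteq\ker(G_{xa})$ to discard the $a_\parallel$ component from the objective, otherwise a term of the form $\|G_{xa}a_\parallel\|_2^2$ would survive and could be unbounded. The orthogonal decomposition together with the fact that $G_{ra}$ admits a strictly positive least nonzero singular value on $\ker(G_{ra})^\perp$ is what makes the bound uniform; I expect this to be the only nontrivial step, as the other direction is a direct ray construction.
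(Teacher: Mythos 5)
Your proof is correct and follows essentially the same route as the paper's: both arguments reduce the question to the behaviour on $\ker(G_{ra})$, where feasibility is automatic and the objective is unbounded unless $G_{xa}$ vanishes there, and to the observation that off this kernel the constraint confines the maximizer to a bounded set. Your ``if'' direction is in fact spelled out more carefully than the paper's one-line closing remark, via the orthogonal decomposition and the smallest nonzero singular value of $G_{ra}$.
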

\begin{proof}
Suppose that $\ker(G_{ra})\neq\emptyset$ and consider the subset of solutions where $a_\infty\in\ker(G_{ra})$. For this subset of solutions, the optimization problem then becomes $\max_{a_\infty\in\ker(G_{ra})} \left\| G_{xa}a_\infty \right\|^2_2$. Since the latter corresponds to a maximization of a convex function, its solution is unbounded unless $G_{xa}a_\infty = 0$ for all $a_\infty\in\ker(G_{ra})$ i.e.,  $\ker(G_{ra})\subseteq\ker(G_{xa})$. Noting that the feasible set and the objective function are bounded for all solutions $a_\infty \not \in \ker(G_{ra})$ concludes the proof.
\end{proof}

Given Lemma~\ref{lem:bounded_bias}, below we consider the non-trivial case for which it holds that $\ker(G_{ra})\subseteq\ker(G_{xa})$. The above optimization problem can be transformed into a generalized eigenvalue problem and the corresponding optimal solution characterized in terms of generalized eigenvalues and eigenvectors. Before formalizing this statement, we introduce the following result.

\begin{lemma}~\label{lem:bias_generalized}
Let $Q\in\mathbb{R}^{n\times n}$ and $P\in\mathbb{R}^{n\times n}$ be positive semi-definite matrices satisfying $\ker(Q)\subseteq\ker(P)$. Denote $\lambda^*$ as the largest generalized eigenvalue of the matrix pencil $(P,Q)$ and $v^*$ as the corresponding eigenvector. Then the matrix $P - \lambda Q$ is negative semi-definite for a generalized eigenvalue $\lambda$ if and only if $\lambda=\lambda^*$. Moreover, we have $\lambda^*\geq0$ and $x^\top(P-\lambda^* Q)x = 0$ with $Qx\neq0$ if and only if $x\in\mbox{span}(v^*)$.
\end{lemma}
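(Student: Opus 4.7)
The plan is to use the variational characterization of the largest generalized eigenvalue through the generalized Rayleigh quotient $R(v) := v^\top P v / v^\top Q v$, which is well-defined whenever $v^\top Q v > 0$. Since $Q$ is PSD, this condition coincides with $v \notin \ker(Q)$, and under the hypothesis $\ker(Q) \subseteq \ker(P)$ the quotient is finite on this domain. Positive semi-definiteness of $P$ makes $R$ non-negative where defined, and the standard theory of symmetric generalized eigenvalue problems gives $\lambda^* = \max_{v \notin \ker(Q)} R(v)$, attained at $v^*$. This immediately yields $\lambda^* \geq 0$.

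For the ``if'' direction, namely $P - \lambda^* Q \preceq 0$, I would split any test vector $v$ into two cases. If $v \in \ker(Q) \subseteq \ker(P)$, both quadratic forms vanish, so $v^\top(P - \lambda^* Q)v = 0$. Otherwise $v^\top Q v > 0$, and $R(v) \leq \lambda^*$ rearranges to $v^\top(P - \lambda^* Q)v \leq 0$. For the ``only if'' direction, suppose $\lambda$ is a generalized eigenvalue and $P - \lambda Q \preceq 0$. Testing the quadratic form at $v^*$ gives $0 \geq v^{*\top}(P - \lambda Q)v^* = (\lambda^* - \lambda)\,v^{*\top} Q v^*$, and since $v^{*\top} Q v^* > 0$ (else $v^*$ would lie in $\ker(Q)\subseteq\ker(P)$, contradicting its being a nontrivial eigenvector), this forces $\lambda \geq \lambda^*$. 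Because $\lambda^*$ is by definition the largest generalized eigenvalue, we must have $\lambda = \lambda^*$.

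For the final equivalence, the forward direction leverages the standard fact that if $M := \lambda^* Q - P \succeq 0$ (which was just established) and $x^\top M x = 0$, then $M x = 0$. Applied here, this gives $Px = \lambda^* Q x$, so any such $x$ with $Qx \neq 0$ is a nontrivial generalized eigenvector of the pencil $(P,Q)$ at $\lambda^*$, hence lies in $\mbox{span}(v^*)$ (reading $v^*$ as spanning the eigenspace at $\lambda^*$, which is consistent with the lemma's single-eigenvector notation and in particular holds verbatim when $\lambda^*$ is simple). The reverse direction is immediate: every $x = \alpha v^*$ satisfies $(P-\lambda^* Q)x = 0$, so $x^\top (P-\lambda^* Q) x = 0$, and $Qx \neq 0$ follows from $v^{*\top} Q v^* > 0$ noted above.

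The main point requiring care is the last equivalence when $\lambda^*$ has multiplicity greater than one; the statement's phrasing tacitly identifies $\mbox{span}(v^*)$ with the full eigenspace at $\lambda^*$, and the argument above goes through verbatim under that reading. The rest of the proof is essentially the generalized Courant--Fischer characterization combined with the kernel-containment hypothesis, which prevents the Rayleigh quotient from being unbounded along $\ker(Q)$.
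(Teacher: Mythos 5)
Your proof is correct, and it reaches the paper's conclusions by a somewhat different route. The paper works with an explicit congruence transformation: it chooses a basis $T=[V_{\bar{N}}\, V_N]$ adapted to $\ker(Q)$, reduces the pencil to a block $\tilde{P}-\lambda \tilde{Q}$ with $\tilde{Q}\succ 0$, and then passes to the ordinary eigenvalues of $\tilde{Q}^{-1/2}\tilde{P}\tilde{Q}^{-1/2}$ to obtain $P-\lambda^* Q\preceq 0$; for the converse it argues that $\tilde{P}-\lambda\tilde{Q}$ is indefinite for generalized eigenvalues below $\lambda^*$ by testing with other eigenvectors. You instead invoke the generalized Rayleigh-quotient characterization $\lambda^*=\max_{v\notin\ker(Q)} v^\top P v/v^\top Q v$ (legitimate precisely because $\ker(Q)\subseteq\ker(P)$ removes the quotient's singular directions), obtain the ``if'' direction by splitting test vectors according to membership in $\ker(Q)$, and obtain the ``only if'' direction by a single evaluation of the quadratic form at $v^*$, which yields $(\lambda^*-\lambda)\,v^{*\top}Qv^*\leq 0$ and hence $\lambda=\lambda^*$. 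That converse argument is in fact a little cleaner than the paper's, which claims indefiniteness of $\tilde{P}-\lambda\tilde{Q}$ for every generalized eigenvalue in $(0,\lambda^*)$ when only the failure of negative semi-definiteness is needed --- exactly what your one-line test establishes, without requiring eigenvalues on both sides of $\lambda$. For the final equivalence both proofs rest on the same facts, namely $\lambda^* Q-P\succeq 0$ together with the kernel characterization of the zero set of a positive semi-definite quadratic form, and both tacitly read $\mbox{span}(v^*)$ as the full eigenspace at $\lambda^*$; you flag that convention explicitly, while the paper leaves it implicit in the phrase ``the subspace spanned by the eigenvectors associated with $\lambda^*$.''
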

\begin{proof}
The proof can be found in Appendix~\ref{app:A}.
\end{proof}

The optimal bias injection attack in the sense of~\eqref{eq:max_impact_2} is characterized by the following result.
\begin{theorem}\label{thm:bias_attack_2}
Consider the $2-$norm safe set $\mathcal{S}_{x^a}^2$ and the corresponding optimal $\alpha-$stealthy bias injection attack parameterized by the optimization problem~\eqref{eq:max_impact_2}, which is assumed to be bounded. Denote $\lambda^*$ and $v^*$ as the largest generalized eigenvalue and corresponding unit-norm eigenvector of the matrix pencil $(G_{xa}^\top G_{xa},\,G_{ra}^\top G_{ra})$. The optimal bias injection attack is given by
\begin{equation}\label{eq:optimal_bias}
 a^*_\infty = \pm\frac{\delta_{\alpha}}{\|G_{ra}v^*\|_2} v^*,
\end{equation}
and the corresponding optimal value is $\|G_{xa}a_\infty \|^2_2 = \lambda^* \delta_{\alpha}^2$. Moreover, at steady-state the system is in a safe state if and only if $\lambda^* \delta_{\alpha}^2 \leq 1$.
\end{theorem}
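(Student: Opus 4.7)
My plan is to recast the optimization problem~\eqref{eq:max_impact_2} as a generalized eigenvalue problem for the matrix pencil $(P,Q):=(G_{xa}^\top G_{xa},\, G_{ra}^\top G_{ra})$ and then apply Lemma~\ref{lem:bias_generalized}. Both $P$ and $Q$ are positive semi-definite by construction, and the assumed boundedness of~\eqref{eq:max_impact_2} together with Lemma~\ref{lem:bounded_bias} gives $\ker(Q)\subseteq\ker(P)$, so the hypotheses of Lemma~\ref{lem:bias_generalized} are met and we obtain a non-negative largest generalized eigenvalue $\lambda^*$ and unit-norm eigenvector $v^*$ satisfying $P\preceq\lambda^* Q$, with the inequality tight only along $\mbox{span}(v^*)$ in the sense made precise by the lemma.

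Next, I would derive a tight upper bound on the objective by exploiting $P\preceq\lambda^* Q$ together with $\lambda^*\geq 0$. For any feasible $a_\infty$,
\begin{equation*}
\|G_{xa}a_\infty\|_2^2 = a_\infty^\top P a_\infty \leq \lambda^* a_\infty^\top Q a_\infty \leq \lambda^*\delta_\alpha^2,
\end{equation*}
where the last step uses the constraint in~\eqref{eq:max_impact_2}. I would then show the bound is attained by rescaling $v^*$ so the constraint is active. Choosing $a_\infty = c v^*$ with $c = \pm\delta_\alpha/\|G_{ra}v^*\|_2$ makes $\|G_{ra}a_\infty\|_2^2 = \delta_\alpha^2$, and the generalized eigenvalue identity $P v^* = \lambda^* Q v^*$ then yields $\|G_{xa}a_\infty\|_2^2 = \lambda^*\delta_\alpha^2$. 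This recovers~\eqref{eq:optimal_bias} and certifies its optimality.

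The steady-state safety claim follows immediately from the definition of $\mathcal{S}_{x^a}^2$. Since $x^{a}_\infty = G_{xa}a_\infty$, the quantity $\|x^{a}_\infty\|_2^2$ is exactly the objective of~\eqref{eq:max_impact_2}, whose worst-case value under $\alpha$-stealthiness is $\lambda^*\delta_\alpha^2$ by the previous step. Hence the closed-loop state remains in $\mathcal{S}_{x^a}^2$ under every admissible bias injection if and only if $\lambda^*\delta_\alpha^2\leq 1$.

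The main obstacle I expect is justifying the rescaling step when $Q$ is singular, i.e.\ ensuring $\|G_{ra}v^*\|_2\neq 0$ for the $v^*$ realising the bound. I would resolve this using $\ker(Q)\subseteq\ker(P)$: if $Qv^*=0$, then also $Pv^*=0$, so such an eigenvector is spurious and contributes no impact; the operative eigenpair furnished by the ``if and only if'' clause of Lemma~\ref{lem:bias_generalized} is one with $Qv^*\neq 0$, which simultaneously guarantees well-posedness of the scalar $c$, uniqueness of the maximising direction up to sign, and the tightness of the bound $\lambda^*\delta_\alpha^2$.
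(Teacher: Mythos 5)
Your proof is correct, and it takes a somewhat different route from the paper's. The paper proves optimality by invoking the necessary \emph{and sufficient} optimality conditions for the nonconvex problem~\eqref{eq:max_impact_2} from a cited reference (stationarity $\left(G_{xa}^\top G_{xa}-\lambda^* G_{ra}^\top G_{ra}\right)a_\infty^*=0$, activeness of the constraint, and negative semi-definiteness of $G_{xa}^\top G_{xa}-\lambda^* G_{ra}^\top G_{ra}$), and then checks that the scaled eigenvector satisfies all three, with Lemma~\ref{lem:bias_generalized} supplying the semi-definiteness. You instead give a self-contained bound-and-attain argument: $a_\infty^\top P a_\infty\leq\lambda^* a_\infty^\top Q a_\infty\leq\lambda^*\delta_\alpha^2$ for every feasible point (using $P-\lambda^* Q\preceq 0$ and $\lambda^*\geq 0$ from the same lemma), followed by attainment at $a_\infty^*=\pm\bigl(\delta_\alpha/\|G_{ra}v^*\|_2\bigr)v^*$. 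Both arguments hinge on the same key fact from Lemma~\ref{lem:bias_generalized}, but yours avoids outsourcing global optimality of a nonconvex QCQP to the cited optimality conditions, which makes it more elementary and arguably more transparent; the paper's KKT route additionally exposes the uniqueness of the maximizing direction (up to sign) via the equality case $x^\top(P-\lambda^* Q)x=0$ characterized in the lemma, which your argument only touches implicitly. Your handling of the potential degeneracy $\|G_{ra}v^*\|_2=0$ is also sound: the paper's convention that a generalized eigenvector satisfies $v\not\in\ker(Q)$, together with $\ker(G_{ra})\subseteq\ker(G_{xa})$ from Lemma~\ref{lem:bounded_bias}, rules it out exactly as you describe.
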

\begin{proof}
Let $P,\,Q\in\mathbb{R}^{n \times n}$ be positive semi-definite matrices such that $\ker(Q)\subseteq\ker(P)$. Recall that $\lambda$ is a generalized eigenvalue of $(P, Q)$ if $\mbox{rank}(P-\lambda Q ) < \mbox{normalrank}(P, Q)$, where $\mbox{normalrank}(P, Q)$ is defined as the rank of $P-\nu Q$ for almost all values of $\nu\in\mathbb{C}$. Furthermore, denote $v$ as the generalized eigenvector associated with $\lambda$ for which $(P-\lambda Q )v = 0$ with $v\not\in\ker(Q)$. The necessary and sufficient conditions for the optimization problem~\eqref{eq:max_impact_2} are given by~\cite{kn:Hiriart-Urruty2001}
\begin{equation*}
\begin{aligned}
0&=(G_{xa}^\top G_{xa}-\lambda^* G_{ra}^\top G_{ra})a_\infty^*,\\
0 &= a_\infty^{*\top} G_{ra}^\top G_{ra}a_\infty^* - \delta_\alpha^2,\\
0 &\geq y^\top (G_{xa}^\top G_{xa}-\lambda^* G_{ra}^\top G_{ra}) y ,\; \mbox{for }y\neq0.
\end{aligned}
\end{equation*}
Suppose $\lambda^*$ is the largest generalized eigenvalue of $(G_{xa}^\top G_{xa},\,G_{ra}^\top G_{ra})$ and let $v^*$ be the corresponding eigenvector. Scaling $v^*$ by $\kappa$ so that $a_\infty^* = \kappa v^*$ satisfies $\|G_{ra}a^*_\infty\|^2_2 = \delta_\alpha^2$ leads to $\kappa =\pm \frac{\delta_{\alpha}}{\|G_{ra}v^*\|_2}$, and the first and second conditions are satisfied. As for the third condition, note that $G_{xa}^\top G_{xa}-\lambda^* G_{ra}^\top G_{ra}$ is negative semi-definite by Lemma~\ref{lem:bias_generalized}, given that $\lambda^*$ is the largest generalized eigenvalue, $G_{xa}^\top G_{xa}$ and $G_{ra}^\top G_{ra}$ are positive semi-definite, and the assumption that $\ker(G_{ra})\subseteq\ker(G_{xa})$.
To conclude our proof, observe that the optimal value is given by $a_\infty^{*\top} G_{xa}^\top G_{xa} a_\infty^* = \lambda^* a_\infty^{*\top} G_{ra}^\top G_{ra}a_\infty^* = \lambda^* \delta_\alpha^2=\|x^a_\infty\|_2^2$ and thus, by definition, $x^a_\infty\in\mathcal{S}_{x^a}^2$ if and only if $\lambda^* \delta_\alpha^2 \leq 1$.
\end{proof}

More generally, the optimal bias injection attacks for ellipsoidal safe sets of the form
$\mathcal{S}_{x^a} = \left\{x^a\in\mathbb{R}^n :\, x^{a^\top}P x^a\leq 1 \right\}$,
with $P$ positive definite, can be found by replacing the objective function in~\eqref{eq:max_impact_2} by $\|P^{1/2} G_{xa}a_\infty\|_2^2$.

Similarly, consider the safe set as defined below.
\begin{definition}\label{def:safe_sets_inf}
 The infinity-norm safe set $\mathcal{S}_{x^a}^\infty$ is defined as
 \[\mathcal{S}_{x^a}^\infty = \left\{x\in\mathbb{R}^n :\, \|x\|_\infty \leq 1 \right\},\]
 and the system is said to be in a safe state if $x^a_k\in\mathcal{S}_{x^a}^\infty$.
\end{definition}

Given the infinity-norm safe set $\mathcal{S}_{x^a}^\infty$, the bias injection attack with the largest impact corresponds to the $\alpha-$stealthy attack yielding the largest bias in the infinity-norm sense. This attack can be obtained by solving the following optimization problem
\begin{equation}\label{eq:max_impact_infty}
\begin{matrix}
\underset{a_\infty}{\max} \left \| G_{xa}a_\infty \right \|_\infty \\
\mbox{s.t. } \; \; \; \; \left \| G_{ra}a_\infty \right \|_2 \leq \delta_{\alpha}.
\end{matrix}
\end{equation}

A possible method to solve this problem is to observe that \[\| G_{xa}a_\infty\|_\infty = \max_i\, \|e_i^\top G_{xa}a_\infty \|_2,\] where the vector $e_i$ is $i-$th column of the identity matrix. Thus one can transform the optimization problem~\eqref{eq:max_impact_infty} into a set of problems with the same structure as~\eqref{eq:max_impact_2}, obtaining
\begin{equation}\label{eq:bias_infinity_2}
\begin{matrix}
\underset{i}{\max}\, \underset{a^i_\infty}{\max}\, \left \| e_i^\top G_{xa}a^i_\infty \right \|_2 \\
\mbox{s.t. } \; \; \; \; \left \| G_{ra}a^i_\infty \right \|_2 \leq \delta_{\alpha}.
\end{matrix}
\end{equation}

\begin{theorem}\label{thm:bias_attack_infinity}
Consider the infinity-norm safe set $\mathcal{S}_{x^a}^\infty$ and the corresponding optimal $\alpha-$stealthy bias injection attack parameterized by the optimization problem~\eqref{eq:max_impact_infty}, which is assumed to be bounded. Let $e_i$ be the $i-$th column of the identity matrix and denote $\lambda^*_i$ and $v^*_i$ as the largest generalized eigenvalue and corresponding unit-norm eigenvector of the matrix pencil $G_{xa}^\top e_i e_i^\top G_{xa} - \lambda G_{ra}^\top G_{ra}$. Letting $\lambda^* = \max_i \lambda^*_i$, with $v^*$ as the corresponding generalized eigenvector, the optimal bias attack is given by
\begin{equation}\label{eq:optimal_bias_infinity}
 a^*_\infty = \pm\frac{\delta_{\alpha}}{\|G_{ra}v^*\|_2} v^*,
\end{equation}
and the corresponding optimal value is $\|G_{xa}a_\infty \|_\infty = \sqrt{\lambda^*} \delta_{\alpha}$. Moreover, at steady-state the system is in a safe state if and only if $\lambda^* \delta_{\alpha}^2 \leq 1$.
\end{theorem}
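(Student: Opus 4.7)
The plan is to leverage the reformulation~\eqref{eq:bias_infinity_2} already given in the text, which exploits the identity $\|G_{xa}a_\infty\|_\infty = \max_i |e_i^\top G_{xa} a_\infty|$ to rewrite~\eqref{eq:max_impact_infty} as an outer maximization over the coordinate index $i$ of an inner problem whose structure is identical to~\eqref{eq:max_impact_2}, but with the matrix $G_{xa}$ replaced by the row vector $e_i^\top G_{xa}$. The theorem then reduces to applying Theorem~\ref{thm:bias_attack_2} to each inner subproblem and selecting the best coordinate.

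Before invoking Theorem~\ref{thm:bias_attack_2}, I would first check that its boundedness hypothesis, namely $\ker(G_{ra}) \subseteq \ker(e_i^\top G_{xa})$, holds for every $i$. This follows from the outer assumption: by the same scaling argument used in the proof of Lemma~\ref{lem:bounded_bias}, applied with the $\infty$-norm in the objective, boundedness of~\eqref{eq:max_impact_infty} is equivalent to $\ker(G_{ra}) \subseteq \ker(G_{xa})$, and trivially $\ker(G_{xa}) \subseteq \ker(e_i^\top G_{xa})$ for each $i$. With this in hand, Theorem~\ref{thm:bias_attack_2} applied to the $i$-th subproblem yields an optimal value $\lambda_i^* \delta_\alpha^2$, where $\lambda_i^*$ is the largest generalized eigenvalue of the pencil $(G_{xa}^\top e_i e_i^\top G_{xa},\, G_{ra}^\top G_{ra})$, attained by the scaled unit-norm eigenvector $\pm\,\delta_\alpha/\|G_{ra} v_i^*\|_2 \cdot v_i^*$. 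Taking the outer maximum, the optimal value of~\eqref{eq:bias_infinity_2} is $\lambda^* \delta_\alpha^2$ with $\lambda^* = \max_i \lambda_i^*$ and associated eigenvector $v^*$, which yields the attack vector~\eqref{eq:optimal_bias_infinity}.

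Finally, converting from the squared inner objective back to the original norm, $\|G_{xa} a_\infty^*\|_\infty = \sqrt{\lambda^* \delta_\alpha^2} = \sqrt{\lambda^*}\,\delta_\alpha$, and since $x^a_\infty = G_{xa} a_\infty^*$ the safe-set condition $x^a_\infty \in \mathcal{S}^\infty_{x^a}$ becomes $\sqrt{\lambda^*}\,\delta_\alpha \leq 1$, equivalently $\lambda^* \delta_\alpha^2 \leq 1$. The only mildly delicate step is the first one, namely the propagation of the boundedness/kernel hypothesis from the outer to each inner subproblem, since without it Theorem~\ref{thm:bias_attack_2} does not apply; the remainder is an essentially bookkeeping reduction combining the $\max$ identity for the infinity norm with the already-proved $2$-norm result.
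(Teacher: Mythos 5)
Your proposal is correct and follows essentially the same route as the paper, which likewise proves the theorem by reducing~\eqref{eq:max_impact_infty} to the family of subproblems~\eqref{eq:bias_infinity_2} and invoking Theorem~\ref{thm:bias_attack_2} for each coordinate. The only difference is that you explicitly verify that the boundedness hypothesis $\ker(G_{ra})\subseteq\ker(e_i^\top G_{xa})$ propagates to each inner subproblem, a detail the paper's one-line proof leaves implicit; this is a worthwhile addition but not a change of method.
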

\begin{proof}
The proof follows directly from considering the set of optimization problems in~\eqref{eq:bias_infinity_2} and applying Theorem~\ref{thm:bias_attack_2}.
\end{proof}

Note that the steady-state value of the data corruption $a^*_\infty$ is not sufficient for the attack to be $\alpha-$stealthy, since the transients are disregarded. In practice, however, it has been observed in the fault diagnosis literature that incipient faults with slow dynamics are hard to detect~\cite{Cheng_Patton_1999}. Therefore the low-pass filter dynamics in the attack policy~\eqref{eq:bias_sequence} could be designed sufficiently slow as to difficult detection. Below we provide a method to verify whether a given filter parameter $\beta$
renders the bias attack $\alpha-$stealthy.

\begin{theorem}\label{thm:bias_transient}
Consider the attack policy $a_{k+1}=\beta a_k + (1-\beta)a^*_\infty$ with $\beta\in(0,\,1)$. The residual $r^a_k$ is characterized as the output of the autonomous system
\begin{equation}\label{eq:bias_transient}
\begin{aligned}
\psi^a_{k+1} &= \bar{A}\psi^a_k\\
r^a_k &= \bar{C}\psi^a_k
\end{aligned}
\end{equation}
with
\begin{equation*}
\begin{aligned}
\bar{A}&=\begin{bmatrix}
\mathbf{A}_e & \mathbf{B}_e & 0\\
0 & \beta I  & (1-\beta)I\\
0 & 0 & I
\end{bmatrix},\quad \psi^a_0=\begin{bmatrix}
0\\
0\\
a^*_{\infty}
\end{bmatrix},
\\
\bar{C} &= \begin{bmatrix}
\mathbf{C}_e & \mathbf{D}_e & 0
\end{bmatrix}.
\end{aligned}
\end{equation*}

Moreover, the attack policy is $\alpha-$stealthy for a given $\beta$ if the following optimization problem admits a solution
\begin{equation}
\begin{array}{rr}
& \underset{\gamma,P}{\min}\quad \gamma\quad  \\
& \mbox{s.t. }  \quad \gamma \leq \delta_\alpha^2,\\
 & P \succ 0,\\
& \psi_0^{a^\top}
P
\psi^a_0 \leq 1, \\
 & \begin{bmatrix}
P               & \bar{C}^\top\\
\bar{C}    &  \gamma I
\end{bmatrix}
\succeq 0,\\
 & \bar{A}^\top
P
\bar{A}
-
P \prec 0.
\end{array}
\end{equation}
\end{theorem}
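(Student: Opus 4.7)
The plan is to establish the theorem in two steps: first verify that~\eqref{eq:bias_transient} is an exact autonomous reformulation of the residual driven by the filtered bias $a_k$, and then show that feasibility of the LMI implies $\|r^a_k\|_2 \leq \delta_\alpha$ for every $k$ via a quadratic Lyapunov invariance argument.

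For the first step, I would start from the residual dynamics~\eqref{eq:residual_dynamics_attack_linear}, which after re-indexing read $\xi^a_{k+1|k+1} = \mathbf{A}_e \xi^a_{k|k} + \mathbf{B}_e a_k$ and $r^a_k = \mathbf{C}_e \xi^a_{k|k} + \mathbf{D}_e a_k$. The filter dynamics $a_{k+1} = \beta a_k + (1-\beta)a^*_\infty$ can be rendered autonomous by introducing a constant auxiliary state $s_k := a^*_\infty$ satisfying $s_{k+1} = s_k$ with $s_0 = a^*_\infty$. Stacking $\psi^a_k = [\xi^{a\top}_{k|k},\; a^\top_k,\; s^\top_k]^\top$ and collecting the three update equations reproduces the block matrix $\bar{A}$ and the output map $\bar{C}$ exactly as stated, together with the initial condition $\psi^a_0 = [0,\; 0,\; a^{*\top}_\infty]^\top$ inherited from $\xi^a_{0|0}=0$ and $a_0 = 0$. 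This step is essentially a direct calculation and I expect no conceptual difficulty.

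For the second step, I would adopt the candidate Lyapunov function $V(\psi) := \psi^\top P \psi$. The LMI $\bar{A}^\top P \bar{A} - P \prec 0$ together with $P \succ 0$ yields $V(\psi^a_{k+1}) < V(\psi^a_k)$ along trajectories of~\eqref{eq:bias_transient}, so combined with $\psi^{a\top}_0 P \psi^a_0 \leq 1$ we obtain $\psi^{a\top}_k P \psi^a_k \leq 1$ for every $k \geq 0$. A Schur complement applied to the block LMI gives $\bar{C}^\top \bar{C} \preceq \gamma P$, and therefore
\[
\|r^a_k\|_2^2 \;=\; \psi^{a\top}_k \bar{C}^\top \bar{C} \psi^a_k \;\leq\; \gamma\, \psi^{a\top}_k P \psi^a_k \;\leq\; \gamma \;\leq\; \delta_\alpha^2,
\]
uniformly in $k$. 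Invoking Definition~\ref{def:alpha_stealthy} then concludes that the attack policy is $\alpha$-stealthy.

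The main subtlety I anticipate is reconciling the strict inequality in Definition~\ref{def:alpha_stealthy} with the weak bound delivered by the LMI: strict Lyapunov decrease upgrades $V(\psi^a_k) < 1$ for $k \geq 1$, and for $k = 0$ one may either assume $\psi^{a\top}_0 P \psi^a_0 < 1$ or replace the scalar constraint by $\gamma < \delta_\alpha^2$ with an arbitrarily small slack, neither of which is restrictive. A minor bookkeeping check is also required to account for the one-step index shift between the convention in~\eqref{eq:residual_dynamics_attack_linear} and the output equation in~\eqref{eq:bias_transient}; this shift is inconsequential for $\alpha$-stealthiness since that property only asserts a uniform bound on $r^a_k$ over all $k \geq k_0$.
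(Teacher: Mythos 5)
Your proof is correct and follows essentially the same route as the paper: the identical augmentation $\psi^a_k=[\xi^{a\top}_{k|k}\;\,a_k^\top\;\,s_k^\top]^\top$ with a constant state carrying $a^*_\infty$, followed by the invariant-ellipsoid output-peak argument (Lyapunov decrease plus Schur complement giving $\bar{C}^\top\bar{C}\preceq\gamma P$) that the paper itself does not spell out but simply delegates to~\cite{Boyd_LMI94}. The one thing your explicit Lyapunov step exposes is a wrinkle in the statement rather than in your reasoning: $\bar{A}$ contains an identity diagonal block and hence has eigenvalue $1$, so the strict constraint $\bar{A}^\top P\bar{A}-P\prec 0$ is never feasible with $P\succ 0$ and should be read as $\preceq 0$ (as in the cited reference), under which your chain of inequalities still delivers $\|r^a_k\|_2^2\leq\gamma\leq\delta_\alpha^2$ once the strict-decrease refinement is replaced by the slack on $\gamma$ you already mention.
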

\begin{proof}
 The autonomous system is directly obtained by considering the augmented state $\psi^a=[\xi_{k|k}^{a^\top}\; l_k^\top\; s_k^\top]^\top$, where $l_k$ is the state of the low-pass filter bank and $s_k$ the integral state initialized at $s_0 = a_\infty$. Given this autonomous system, one observes that the attack is $\alpha-$stealthy if and only if the corresponding output-peak $\|r^a_k\|_2^2$ is bounded by $\delta_\alpha^2$ for all $k\geq0$, given the initial condition parameterized by $\alpha^*_\infty$. The remainder of the proof follows directly from the results in~\cite{Boyd_LMI94} regarding output-peak bounds for autonomous systems.
\end{proof}

However, the output-peak bounds are in general conservative and thus the conditions in the previous theorem are only sufficient.

\textbf{Disclosure resources: }
Similarly to the zero attack, no disclosure capabilities are required for this attack, since the attack policy is open-loop. Therefore we have $\mathcal{R}^{u}=\mathcal{R}^{y}=\emptyset$ and $\mathcal{I}^u_{k}=\mathcal{I}^u_{k}=\emptyset \,\forall k$.

\textbf{Disruption resources: }
The biases may be added to both the actuator and sensor data, hence the required resources are $\mathcal{R}_I^{u} \subseteq \{1,\dots,q\}$, $\mathcal{R}_I^{y}\subseteq \{1,\dots,p\}$. Since no physical attack is performed, we have $F=0$.

\textbf{System knowledge: }
As seen in~\eqref{eq:max_impact_2}, the open-loop attack policy~\eqref{eq:bias_sequence} requires the knowledge of the closed-loop system and anomaly detector steady-state gains $G_{ra}$ and $G_{xa}$, which we denoted as $\mathcal{K}_0$ as shown in Figure~\ref{fig:bias_attack}.

\section{Experiments}\label{sec:experiments}
In this section we present our testbed and report experiments on staged cyber attacks following the different scenarios described in the previous section.

\subsection{Quadruple-Tank Process}

Our testbed consists of a Quadruple-Tank Process (QTP) \cite{Johansson2000} controlled through a wireless communication network, as shown in Figure~\ref{figQTP}.
\begin{figure}[thpb]
   \centering
   \includegraphics[scale=0.9]{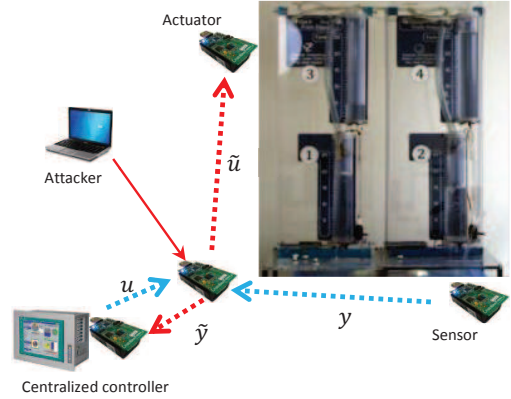}
   \caption{Schematic diagram of the testbed with the Quadruple-Tank Process and a multi-hop communication network.}
\label{figQTP}
\end{figure}

The plant model can be found in~\cite{Johansson2000}
\begin{equation}\label{eqQTP}
   \begin{split}
       \dot{h}_1 &= -\frac{a_1}{A_1}\sqrt{2gh_1}+\frac{a_3}{A_1}\sqrt{2gh_3}+\frac{\gamma_1 k_1}{A_1}u_1,\\
      \dot{h}_2 &= -\frac{a_2}{A_2}\sqrt{2gh_2}+\frac{a_4}{A_2}\sqrt{2gh_4}+\frac{\gamma_2 k_2}{A_2}u_2,\\
      \dot{h}_3 &= -\frac{a_3}{A_3}\sqrt{2gh_3}+\frac{(1-\gamma_2) k_2}{A_3}u_2,\\
      \dot{h}_4 &= -\frac{a_4}{A_4}\sqrt{2gh_4}+\frac{(1-\gamma_1) k_1}{A_4}u_1,
   \end{split}
\end{equation}
where $h_i \in[0,\; 30]$ are the heights of water in each tank, $A_i$ the cross-section area of the tanks, $a_i$ the cross-section area of the outlet hole, $k_i$ the pump constants, $\gamma_i$ the flow ratios and $g$ the gravity acceleration. The nonlinear plant model is linearized for a given operating point. Moreover, given the range of the water levels, the following safe set is considered $\mathcal{S}_x = \{x\in\mathbb{R}^n:\; \|x-\sigma\mathbf{1} \|_\infty\leq 15,\, \sigma=15\}$, where $\mathbf{1}\in\mathbb{R}^n$ is a vector with all entries set to $1$.

The QTP is controlled using a centralized LQG controller with integral action running in a remote computer and a wireless network is used for the communications. A Kalman-filter-based anomaly detector is also running in the remote computer and alarms are triggered according to~\eqref{eq:residue_threshold}, for which we computed $\delta_r=0.15$ and chose $\delta_{\alpha}=0.25$ for illustration purposes.
The communication network is multi-hop, having one additional wireless device relaying the data, as illustrated in Figure~\ref{figQTP}.

\subsection{Denial-of-Service Attack}
Here we consider the case where the QTP suffers a DoS attack on both sensors, while operating at a constant set-point. The state and residual trajectories from this experiment are presented in Figure~\ref{fig_dos_graph}. The DoS attack follows a Bernoulli model~\cite{AminCardenasSastry-HSCC-2009} with $p=0.9$ as the probability of packet loss and the last received data is used in the absence of data. From Proposition~\ref{thm:DoS_stability}, we have that the closed-loop system under such DoS attack is exponentially stable.

\begin{figure}[t]
   \centering
   \includegraphics[width=0.45\hsize]{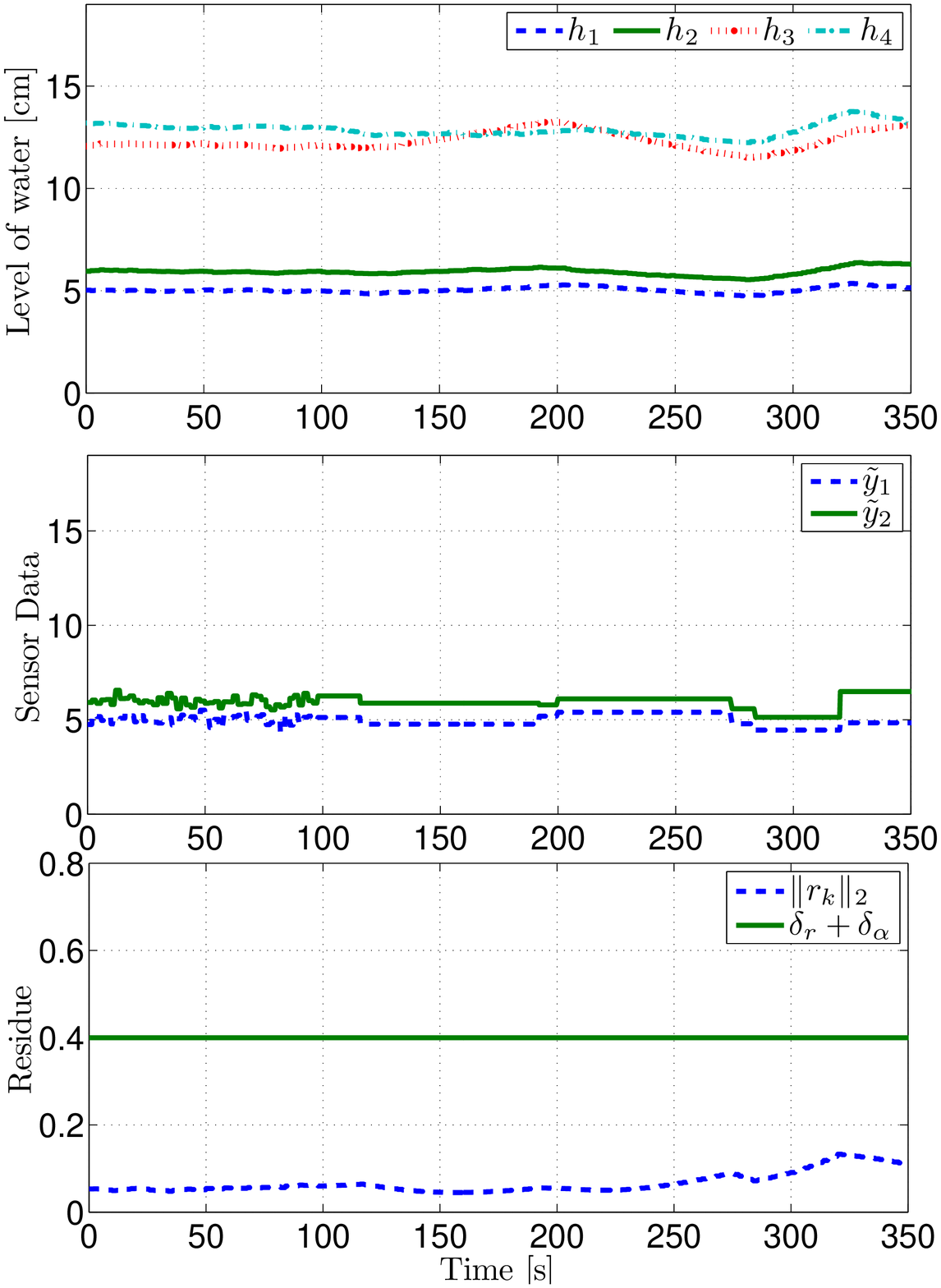} 
   \caption{Results for the DoS attack performed against both sensors since $t\approx 100\,s$.\label{fig_dos_graph}}
\end{figure}

The DoS attack initiates at $t\approx 100\,s$, leading to an increase in the residual due to successive packet losses. However the residual remained below the threshold during the attack and there were no significant changes in the system's state.

\subsection{Replay Attack}
In this scenario, the QTP is operating at a constant set-point while a hacker desires to steal water from tank 4, the upper tank on the right side.
An example of this attack is presented in Figure~\ref{fig_replay_graph}, where the replay attack policy is the one described in Section~\ref{sec:replay_attack}. The adversary starts  by replaying past data from $y_2$ at $t\approx 90\,s$ and then begins stealing water from tank 4 at $t\approx 100\,s$. Tank 4 is successfully emptied and the attacks stops removing water at $t\approx180\,s$. To ensure stealthiness, the replay attack continues until the system recovered its original setpoint at $t\approx 280\,s$. As we can see, the residue stays below the alarm threshold and therefore the attack is not detected.

\subsection{Zero-Dynamics Attack}
The QTP has a non-minimum phase configuration in which the plant possesses an unstable zero. In this case, as discussed in Section~\ref{sec:attack_zero}, an adversary able to corrupt all the actuator channels may launch a false-data injection attack where the false-data follows the zero-dynamics. Moreover, since the safe region is described by the set $\mathcal{S}_x = \{x\in\mathbb{R}^n:\; \|x-\sigma\mathbf{1} \|_\infty\leq 15,\, \sigma=15\}$, from Theorem~\ref{thm:zero_unsafe} we expect that the zero-dynamics attack associated with the unstable zero can drive the system to an unsafe region. This scenario is illustrated in Figure~\ref{fig_zero_graph}.

\begin{figure}[t]
   \centering
   \includegraphics[width=0.5\hsize]{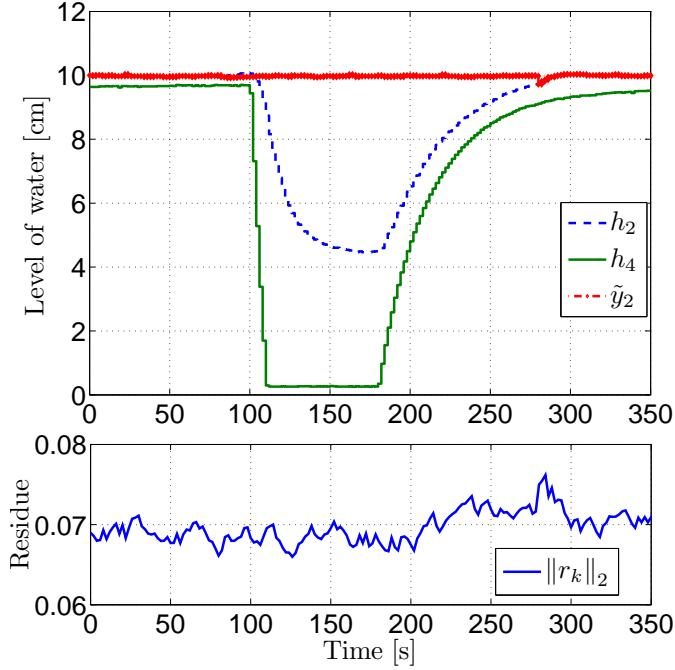}
   \caption{Results for the replay attack performed against sensor 2 from $t\approx 90\,s$ to $t\approx 280\,s$. Additionally, the adversary opens the tap of tank 4 at $t \approx 100\,s$ and closes it at $t \approx 180\,s$.\label{fig_replay_graph}}
\end{figure}

\begin{figure}[t]
   \centering
   \includegraphics[width=0.5\hsize]{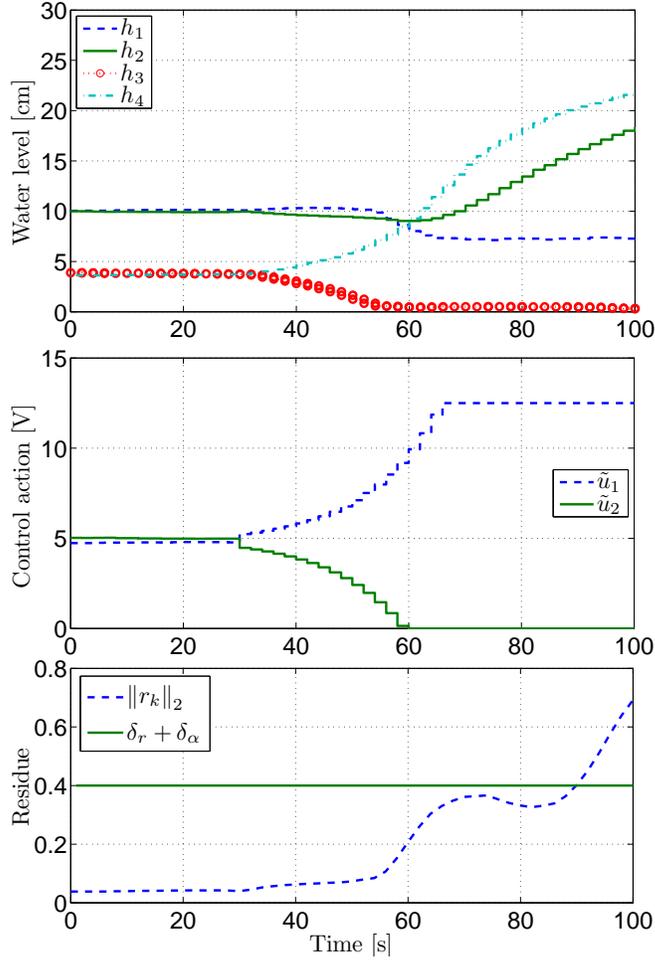} 
    \caption{Results for the zero-dynamics attack starting at $t \approx 30\,s$. Tank 3 is emptied at $t\approx 55\,s$, resulting in a steep increase in the residual since the linearized model is no longer valid.\label{fig_zero_graph}}
\end{figure}

The adversary's goal is to either empty or overflow at least one of the tanks, considered as an unsafe state. The attack on both actuators begins at $t\approx30\,s$, causing a slight increase in the residual. Tank 3 becomes empty at $t\approx55\,s$ and shortly after actuator 2 saturates, producing a steep increase in the residual which then crosses the threshold. However, note that the residual was below the threshold when the unsafe state was reached.

After saturation of the water level and the actuators, the system dynamics change and therefore the attack signal no longer corresponds to the zero-dynamics and is detected, although it has already damaged the system. Thus these attacks are particularly dangerous in processes that have unstable zero-dynamics and in which the actuators are over-dimensioned, allowing the adversary to perform longer attacks before saturating.

\subsection{Bias Injection Attack}

The results for the case where $u_1$ and $y_1$ are respectively corrupted with $b^u_\infty$ and $b^y_\infty$ are presented in the Figure~\ref{figSA}. In this scenario, the adversary aimed at driving the system out of the safe set $\mathcal{S}_x$ while remaining stealthy for $\delta_\alpha = 0.25$. The bias was slowly injected using a first-order low-pass filter with $\beta=0.95$ and the following steady-state value, computed using Theorem~\ref{thm:bias_attack_infinity}, $a_\infty = [b^u_\infty\, b^y_\infty]^\top = [2.15\, -9.42]^\top$.

\begin{figure}[t]
   \centering
   \includegraphics[width=0.5\hsize]{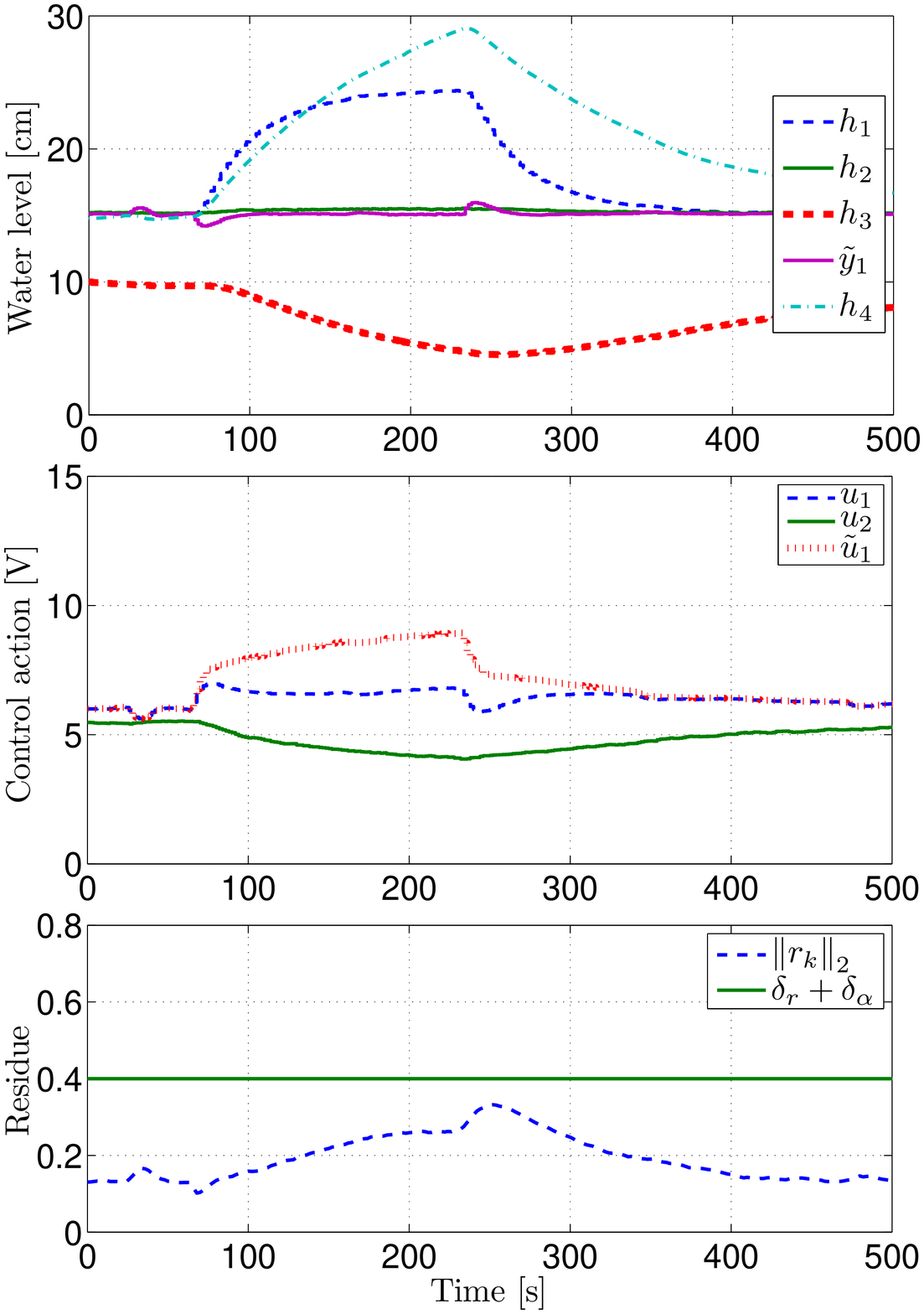} 
   \caption{Results for the bias attack against the actuator 1 and sensor 1 in the minimum phase QTP. The attack is launched using a low-pass filter in the instant $t \approx 70\,s$ and stopped at $t \approx 230\,s$.}
\label{figSA}
\end{figure}

The bias injection began at $t\approx70\,s$ and led to an overflow in tank 4 at $t\approx225\,s$. At that point, the adversary started removing the bias and the system recovered the original setpoint at $t\approx 350\,s$. The residual remained within the allowable bounds throughout the attack, thus the attack was not detected.

\section{Conclusions}\label{sec:conc}
In this paper we have analyzed the security of networked control systems. A novel attack space based on the adversary's system knowledge, disclosure, and disruption resources was proposed and the corresponding adversary model described. Attack scenarios corresponding to replay, zero-dynamics, and bias injection attacks were analyzed using this framework. In particular the maximum impact of stealthy bias injection attacks was derived and it was shown that the corresponding policy does not require perfect model knowledge. These attack scenarios were illustrated using an experimental setup based on a quadruple-tank process controlled over a wireless network.

\section{Acknowledgments}
This work was supported in part by the European Commission through the HYCON2 project, the EIT-ICT Labs through the project SESSec-EU, the Swedish Research Council under Grants 2007-6350 and 2009-4565, and the Knut and Alice Wallenberg Foundation.

\bibliographystyle{plain}
\bibliography{references_all}

\appendix
\section{Proof of Lemma~\ref{lem:bias_generalized}}\label{app:A}
Recall that $\lambda$ is a generalized eigenvalue of $(P, Q)$ if $\mbox{rank}(P-\lambda Q ) < \mbox{normalrank}(P, Q)$, where $\mbox{normalrank}(P, Q)$ is defined as the rank of $P-\nu Q$ for almost all values of $\nu\in\mathbb{C}$. Furthermore, denote $v$ as the generalized eigenvector associated with $\lambda$ for which $(P-\lambda Q )v = 0$ with $v\not\in\ker(Q)$.

Define $T=[V_{\bar{N}} \, V_N] \in \mathbb{R}^{n\times n}$ where the columns of $V_N$ are a basis for $\ker(Q)$ and $V_{\bar{N}}$ is chosen such that $T$ is nonsingular. Given that $\ker(Q)\subseteq\ker(P)$, the coordinate transformation induced by $T$ leads to
\[T(P-\lambda Q)T^{-1} = \begin{bmatrix}\tilde{P}-\lambda \tilde{Q} & 0\\ 0 & 0\end{bmatrix},\]
where $\tilde{Q}\succ 0$ and $\tilde{P}\succeq 0$ and we conclude that $P-\lambda Q \preceq0$ if and only if $\tilde{P}-\lambda \tilde{Q} \preceq0$. Additionally, we see that all the non-zero generalized eigenvalues of $(P,Q)$ need to reduce the rank of $\tilde{P}-\lambda \tilde{Q}$ and thus need to be positive. Hence we have proved that all generalized eigenvalues are non-negative and that $\lambda^*\geq0$.

Now we show that $\tilde{P}-\lambda \tilde{Q}$ is indefinite for all generalized eigenvalues $0 < \lambda < \lambda^*$. Let $\bar\lambda > 0$ be a generalized eigenvalue of $(\tilde{P},\tilde{Q})$ with the associated eigenvector $\bar{v}$. Then $\bar{v}^{\top}(\tilde{P}-\lambda \tilde{Q})\bar{v} = (\bar{\lambda} - \lambda)\bar{v}^{\top}\tilde{Q}\bar{v}$, which can be made positive or negative for all generalized eigenvalues $\lambda \in (0,\, \lambda^*)$ and thus our assertion is proved.

As the next step, we show that $\tilde{P}-\lambda^* \tilde{Q}\preceq 0$. Since $\tilde{Q}$ is invertible, the generalized eigenvalues of $(\tilde{P},\tilde{Q})$ correspond to the eigenvalues of the positive semi-definite matrix $M\tilde{P}M$ with $M=\tilde{Q}^{-1/2}$. Furthermore note that $\tilde{P}-\lambda^* \tilde{Q}\preceq 0$ is equivalent to having $M\tilde{P}M-\lambda^* I\preceq 0$, which holds since $M\tilde{P}M$ is positive semi-definite with $\lambda^*$ as the largest eigenvalue.

All that is left to show now is that $x^\top(P-\lambda^* Q)x = 0$ with $Qx\neq0$ if and only if $x\in\mbox{span}(v^*)$. Given the condition $Qx\neq0$, it is enough to verify that $x^\top(\tilde{P}-\lambda^* \tilde{Q})x = 0$ for $x\neq0$ if and only if $x\in\mbox{span}(\tilde{v}^*)$, where $\tilde{v}^*$ is the generalized eigenvector of $(\tilde{P},\tilde{Q})$ associated with $\lambda^*$. The proof is concluded by recalling that $\tilde{P}-\lambda^* \tilde{Q}\preceq 0$, hence $x^\top(\tilde{P}-\lambda^* \tilde{Q})x = 0$ if and only if $x$ belongs to the subspace spanned by the eigenvectors associated with $\lambda^*$.

\end{document}